\newtheorem {theorem}{Theorem }[section]
\newtheorem {proposition}{Proposition }[section]
\newtheorem {corollary}{Corollary }[section]
\newtheorem {lemma}{Lemma }[section]
\newtheorem {remark}{Remark}[section]
\newtheorem {assumption}{Assumption }[section]
\def\R{{\mathbf R}}
\def\M{{\mathbf M}}
\def\cM{{\cal M}}
\def\cN{{\cal N}}
\def\cC{{\cal C}}
\def\cA{{\cal A}}
\def\cF{{\cal F}}
\title{Existence results for optimal control problems with some special non-linear dependence on state and control}
\author{Pablo Pedregal\thanks{Universidad de Castilla-La Mancha, ETSI Industriales, 13071 Ciudad Real, Spain ({\tt pablo.pedregal@uclm.es}). This work was supported by the research projects MTM2007-62945 of the MCyT (Spain) and PCI08-0084-0424 of the JCCM (Castilla-La Mancha).}  \and  Jorge Tiago\thanks{Universidad de Castilla-La Mancha, ETSI Industriales, 13071 Ciudad Real, Spain ({\tt jorge.tiago@uclm.es}). This research was supported by Funda\c{c}\~ao para a Ci\^encia e a Tecnologia (Portugal)), doctoral degree grant SFRH-BD-22107-2005 and by CMAF-Lisbon University through
FEDER and FCT-Plurianual 2007.}}
\date{}
\begin{document}
% generates the title
\maketitle
% insert the table of contents
%\tableofcontents

\textbf{Abstract}. We present a general approach to prove existence of solutions for optimal control problems not based on typical convexity conditions which quite often are very hard, if not impossible, to check. By taking advantage of several relaxations of the problem, we isolate an assumption which guarantees the existence of solutions of the original optimal control problem. To show the validity of this crucial hypothesis through various means and in various contexts is the main body of this contribution. In each such situation, we end up with some existence result. In particular, we would like to stress a general result that takes advantage of the particular structure of both the cost functional and the state equation. One main motivation for our work here comes from a model for guidance and control of ocean vehicles. Some explicit existence results and comparison examples are given.

\textbf{Keywords.} Relaxation, moments, non-linear, non-convex.

\section{Introduction}
This paper focuses on the analysis of optimal control problems of the general form 
%\begin{description}
% \item[$(P_1)$] 
\begin{equation}\label{original1}
(P_1)\qquad\qquad \qquad\qquad   \hbox{Minimize in }u:\quad \int_0^T[\sum_{i=1}^s c_i(x(t))\phi_i(u(t))]\, dt \qquad  \qquad\qquad  \qquad
\end{equation}
%\end{description}

subject to
\begin{equation}
\label{original2}
x'(t)=\sum_{i=1}^s Q_i(x(t))\phi_i(u(t))\textrm{ in }(0,T), \quad x(0)=x_0\in\R^N,
\end{equation}

and
\begin{equation}
\label{original3}
u\in L^\infty(0, T),\quad u(t)\in K,
\end{equation}
where $K\subset\R^m$ is compact. The state 
$x:(0, T)\to\R^N$ takes values in $\R^N$. 

The mappings
$$
c_i:\R^N\to\R,\quad \phi_i:\R^m\to\R,\quad Q_i:\R^N\to\R^N
$$
as well as the restriction set $K\subset\R^m$ will play a fundamental role. We assume, at this initial stage, that $c_i$ are continuous, $\phi_i$ are of class $\cC^1$, and each $Q_i$ is Lipschitz so that the state system is well-posed. 

In such a general form, we cannot apply results for non-necessarily convex problems like the ones in \cite{B}, \cite{C}, \cite{Ry} or \cite{RbkS}. Besides, techniques based on Bauer' Maximum Principle (\cite{Br}) are quite difficult to extend to our general setting because it is hard to analyze the concavity of the cost functional when the dependence on both state and control comes in product form. Also the Rockafellar's variational reformulation introduced in  \cite{R1}, and well-described in \cite{C1}, \cite{ET} or recently in \cite{P} or \cite{PT}, looks as if it cannot avoid assuming a separated dependence on the state and control variables, since this is the structure of the variational problem for which the existence of solution has been so far ensured (\cite{CC}). 

Concerning the classical Filippov-Roxin theory introduced in \cite{F} and \cite{Rx}, it is not easy at all to know if typical convexity assumptions hold, or when they may hold, as we can see from the examples and counter-examples in \cite{C1}. When analyzing explicit examples, one realizes such difficulties coming from the need of a deep understanding of typical orientor fields. The same troubles would arise when applying refinements of this result as the ones in \cite{Mk} and \cite{MP}. 

 Recently (\cite{CFM}), an existence result has been shown for minimum time problems where the typical convexity assumptions over the set valued function on the differential inclusion has been replaced by more general conditions. In fact, the intersection of this result with the ones we present here is not empty although, as we will comment, our frame extends to situations not covered by this result. Such analysis can be done by writing problem $(P_1)$ as a minimum time problem as suggested in \cite{C1}. 

%it is very hard to give general existence results because it is not easy at all to know if typical convexity assumptions hold, or when they may hold (\cite{C1},  \cite{P}, \cite{PT} and \cite{V}). 

Our aim is to provide hypotheses on the different ingredients of the problem so that existence of solutions can be achieved through an independent road. Actually, it is not easy to claim whether our results improve on classical or more recent general results. They provide an alternative tool which can be more easily used in practice than such results when one faces an optimal control problem under the special structure we consider here. As a matter of fact, convexity will also occur in our statements but in an unexpected and non-standard way. 

Before stating our main general result, a bit of notation is convenient. 
We will write
\begin{equation}\label{Def_functions}
c:\R^N\to\R^s,\quad \phi:\R^n\to\R^s,\quad Q:\R^N\to\R^{Ns},
\end{equation}
with components  $c_i$, $\phi_i$, and $Q_i$, respectively. Consider also a new ingredient of the problem related to $\phi$. Suppose that there is a $\cC^1$ mapping

\begin{equation}\label{Def_function_Psi}
\Psi:\R^s\to\R^{s-n},\quad \Psi=(\psi_1,...,\psi_{s-n}), \quad (s>n),
\end{equation}
so that $\phi(K)\subset\{\Psi=0\}$. This is simply saying, in a rough way, that the embedded (parametrized) manifold $\phi(K)$ of $\R^s$ is part of the manifold defined implicitly by $\Psi=0$. In practical terms, it suffices to check that the composition $\Psi(\phi(u))=0$ for $u\in K$. 

For a pair $(c, Q)$, put
\begin{equation}\label{Def_NcQ}
\cN(c, Q)=\left\{v\in\R^s: Qv=0, cv\le0\right\}.
\end{equation}

Similarly, set
\begin{equation}\label{Def_NKphi}
 \cN(K, \phi)=
\end{equation}
$$
\left\{v\in\R^s: \hbox{ for each }u\in K,\hbox{ either }\nabla\Psi(\phi(u))v=0\hbox{ or there is $i$ with }\nabla\psi_i(\phi(u))v>0\right\}.
$$
Our main general result is the following.

\begin{theorem}\label{main}
Assume that the mapping $\Psi$ as above is strictly convex (componentwise) and $\cC^1$. If for each $x\in\R^N$, we have
\begin{equation}\label{Condition_inTeoMain}
\cN(c(x), Q(x))\subset\cN(K, \phi),
\end{equation}
then the corresponding optimal control problem $(P_1)$ admits at least one solution.
\end{theorem}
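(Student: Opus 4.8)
The overall strategy is the classical direct method applied to a suitable relaxation, combined with the structural hypothesis \eqref{Condition_inTeoMain} to recover a genuine control from the relaxed optimum. First I would pass to the Young-measure (or Gamzov/chattering) relaxation of $(P_1)$: since $\phi_i$ are continuous and $K$ is compact, along a minimizing sequence $u_k$ one extracts a Young measure $\nu=(\nu_t)_{t\in(0,T)}$ supported on $K$, and the sequence of vectors $\Phi_k(t):=\phi(u_k(t))\in\R^s$ converges weakly-$*$ in $L^\infty$ to $\overline\Phi(t):=\int_K\phi(\lambda)\,d\nu_t(\lambda)$. Because the state equation \eqref{original2} and the cost \eqref{original1} are both \emph{affine} in the vector $\phi(u)$ (the nonlinearity in $x$ being carried by the Lipschitz maps $Q_i$, $c_i$), the relaxed state $\bar x$ solving $\bar x'=Q(\bar x)\overline\Phi$, $\bar x(0)=x_0$, is the limit of the $x_k$, and the relaxed cost $\int_0^T c(\bar x)\cdot\overline\Phi\,dt$ is the limit of the costs; hence this relaxed problem attains its minimum at some $(\bar x,\overline\Phi)$ with $\overline\Phi(t)$ lying in the closed convex hull $\mathrm{co}\,\phi(K)$ for a.e. $t$. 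This part is standard and I would state it as a lemma.

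The heart of the argument is to show that this relaxed minimizer is in fact \emph{not relaxed}: that one can replace $\overline\Phi(t)$ by $\phi(u(t))$ for an admissible control $u$ without increasing the cost and without changing the trajectory. Fix $t$ and set $x=\bar x(t)$, $w=\overline\Phi(t)\in\mathrm{co}\,\phi(K)$. I want to find $\lambda\in K$ with $Q(x)\phi(\lambda)=Q(x)w$ and $c(x)\cdot\phi(\lambda)\le c(x)\cdot w$; equivalently, writing $v=\phi(\lambda)-w$, I need $v$ with $Q(x)v=0$, $c(x)\cdot v\le 0$, i.e. $v\in\cN(c(x),Q(x))$, and with $w+v\in\phi(K)$. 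The strategy is to use the strict convexity of $\Psi$ to force every point of $\mathrm{co}\,\phi(K)$ that is ``extremal in the directions allowed by $\cN(K,\phi)$'' to land on $\phi(K)$ itself. Concretely: among all $\lambda\in K$ achieving equality of the constraints, minimize the component $\psi_1$ (say) of $\Psi$ evaluated along the relevant segment; strict convexity of $\psi_j$ means $\psi_j$ cannot vanish on a nondegenerate segment of $\mathrm{co}\,\phi(K)$ unless its endpoints already sit where $\nabla\psi_j$ points ``outward,'' and this is exactly the dichotomy encoded in the definition of $\cN(K,\phi)$. The hypothesis \eqref{Condition_inTeoMain} guarantees that whatever correction direction $v\in\cN(c(x),Q(x))$ we are forced to use, it satisfies the alternative defining $\cN(K,\phi)$ at every $u\in K$, which is precisely what is needed to run this convexity/extremality argument and conclude $w+v\in\phi(K)$.

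Once the pointwise selection is available, I would invoke a measurable selection theorem (Filippov's lemma) to produce a measurable $u:(0,T)\to K$ with $\phi(u(t))=\overline\Phi(t)$ a.e. (after the correction), note $u\in L^\infty$ since $K$ is compact, observe that $(x,u)=(\bar x,u)$ is then admissible for $(P_1)$ with cost equal to the relaxed minimum, and finally argue that the relaxed minimum is a lower bound for the infimum of $(P_1)$ (trivially, since every admissible control induces a relaxed state/cost via the Dirac Young measure). Therefore $(\bar x,u)$ solves $(P_1)$.

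The main obstacle I anticipate is the pointwise selection step, specifically extracting from strict convexity of $\Psi$ together with \eqref{Condition_inTeoMain} the conclusion that the required correction $v$ keeps us on $\phi(K)=\mathrm{co}\,\phi(K)\cap\{\Psi=0\}$ rather than merely in the convex hull. One must be careful that $\{\Psi=0\}$ may contain more than $\phi(K)$, and that ``strictly convex componentwise'' is used in the right way --- presumably to ensure $\mathrm{co}\,\phi(K)\cap\{\Psi=0\}$ has empty relative interior in each $\psi_j=0$ direction, so that any admissible correction direction is tangent to or points out of $\{\Psi\le 0\}$, which is what $\cN(K,\phi)$ records. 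Handling measurability of the correction uniformly in $t$, and checking that the corrected $\overline\Phi$ still produces the same $\bar x$ (it does, since $Q(\bar x)v=0$ along the correction), are the remaining technical points but should be routine given a clean statement of the selection lemma.
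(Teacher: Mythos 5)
Your overall architecture coincides with the paper's: relax via Young measures and moments (the paper's formulations $(P_2)$--$(P_4)$ and Theorem \ref{gen}), reduce everything to the pointwise claim that for each $x$ and each $\xi\in Q(x)\Lambda$ the minimizer of $c(x)\cdot m$ over the section $\{m\in\Lambda:\ Q(x)m=\xi\}$ lies in $L=\phi(K)$ (this is exactly Assumption \ref{princ}), and finish with a measurable selection. Your preliminary observations are also correct and are the ones the paper uses: componentwise convexity of $\Psi$ gives $\Lambda\subset\{\Psi\le0\}$, and strict convexity gives $\Lambda\cap\{\Psi=0\}=L$.

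The gap is precisely in the step you flag as the main obstacle, and the mechanism you propose there does not close it. You set out to \emph{produce} a point $\phi(\lambda)$ in the section with $c\cdot\phi(\lambda)\le c\cdot w$ by ``minimizing $\psi_1$ along the relevant segment'' and appealing to where $\nabla\psi_j$ ``points outward''; but the dichotomy defining $\cN(K,\phi)$ must be evaluated at a point of $L$ that you do not yet have, and nothing in the sketch explains why a one-parameter minimization of a single component of $\Psi$ should terminate on $\phi(K)$ rather than elsewhere in $\{\Psi\le 0\}$. What actually works is a first-order convexity estimate anchored at a point of $L$ \emph{already in the section}: let $m_0=\phi(u)\in L$ and $m_1\in\Lambda$ satisfy $Qm_1=Qm_0$ and $c\cdot m_1\le c\cdot m_0$, so that $v=m_1-m_0\in\cN(c,Q)\subset\cN(K,\phi)$, and apply the dichotomy at this particular $u$. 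If $\nabla\psi_i(m_0)v>0$ for some $i$, convexity gives $\psi_i(m_1)\ge\psi_i(m_0)+\nabla\psi_i(m_0)v>0$, contradicting $m_1\in\Lambda\subset\{\Psi\le0\}$. If instead $\nabla\Psi(m_0)v=0$, the same inequality gives $\Psi(m_1)\ge\Psi(m_0)=0$, hence $\Psi(m_1)=0$, and strict convexity forces $m_1=m_0$. Thus any point of $L$ in the section is the unique minimizer there, which is Assumption \ref{princ}, and Theorem \ref{gen} concludes. Note the logical direction is opposite to yours: the paper starts from a point of $\phi(K)$ in the section and shows nothing else can tie it, whereas you start from the relaxed minimizer $w$ and try to correct it onto $\phi(K)$; in your direction you additionally owe an argument that the section meets $L$ at all before any dichotomy can be invoked. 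To repair your write-up, replace the segment/extremality heuristic by the two-case gradient estimate above.
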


As it stands, this result looks rather abstract, and it is hard to grasp to what extent may be applied in more specific situations. 

A particular, yet still under some generality, situation where this result can be implemented is the case of polynomial dependence where the $\phi_i$'s are polynomials of various degrees. The main structural assumption, in addition to the one coming from the set $K$, is concerned with the convexity of the corresponding mapping $\Psi$. 

Suppose we take $\phi_i(u)=u_i$, for $i=1, 2, \dots, n$, and
$\phi_{n+i}(u)$, $i=1, 2, \dots, s-n$, convex polynomials of whatever degree, or simply polynomials whose restriction to $K$ is convex. In particular, $K$ itself is supposed to be convex. Then we can take
\begin{equation}\label{Def_psi_n}
\Psi_i(v)=\phi_{n+i}(\overline v)-v_{n+i},\quad i=1, 2, \dots, s-n, \quad \overline v=(v_i)_{i=1, 2, \dots, n}.
\end{equation}

In this case, it is clear that 
$$%\begin{equation}
\Psi(\phi(u))=0\hbox{ for }u\in K,
$$%\end{equation}
by construction, and, in addition, $\Psi$ is smooth and convex. The important constraint (\ref{Condition_inTeoMain}) can also be analyzed in more concrete terms, if we specify in a better way the structure of the problem. 

As an illustration, though more general results are possible, we will concentrate on an optimal control problem of the type
\begin{description}
 \item[$(P)$]
\begin{equation}\label{vectorial1}
\hbox{Minimize in }u:\quad \int_0^T\left[ \sum_{i=1}^n c_i(x(t))u_i(t)+\sum_{i=1}^nc_{n+i}(x(t))u_i^2(t)\right]\,dt
\end{equation}
subject to
\begin{equation}\label{vectorial2}
x'(t)=Q_0(x(t))+Q_1(x(t))u(t)+Q_2(x(t))u^2(t)\hbox{ in }(0, T),
\end{equation}
\begin{equation}\label{vectorial3}
x(0)=x_0\in\R^n, \text{ and } u(t)\in K\subset\R^n. 
\end{equation}
\end{description}

We are taking here $N=n$. 
$Q_1$ and $Q_2$ are $n\times n$ matrices that, together with the vector $Q_0$, comply with appropriate technical hypotheses so that the state law is a well-posed problem. Set
\begin{equation}\label{Def_Q}
Q=\begin{pmatrix}Q_1&Q_2\end{pmatrix},\quad c=\begin{pmatrix}c_1&c_2\end{pmatrix},
\end{equation}
where $Q_1$ is a non-singular $n\times n$ matrix, and $c_1\in\R^n$. In addition, we put
\begin{equation}\label{Def_DEU}
D(x)=-(Q_1)^{-1}Q_2,\quad E(x)=c_1D+c_2,\quad U(m, x)=2\sum_im_ie_i\otimes e_i D-\hbox{id},\quad m=\phi(u),
\end{equation}
where the $e_i$'s stand for the vectors of the canonical basis of $\R^n$, and id is the identity matrix of size $n\times n$. 

\begin{theorem}\label{sec}
Suppose that for the ingredients $(c, Q, K)$ of $(P)$, we have
\begin{enumerate}
\item the matrix $U$ is always non-singular for $u\in K$, and $x\in\R^{n}$;
\item for such pairs $(u, x)$, we always have $U^{-T}E<0$, componentwise. 
\end{enumerate}
Then the optimal control problem admits solutions. 
\end{theorem}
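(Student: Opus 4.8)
The strategy is to derive Theorem \ref{sec} as a corollary of Theorem \ref{main} by verifying its hypothesis — inclusion (\ref{Condition_inTeoMain}) — in the concrete polynomial setting described above. So I first have to translate all the abstract ingredients into the language of $(P)$. Here $s=2n$, the number of "linear" components is $n$ (since $\phi_i(u)=u_i$ for $i\le n$ and $\phi_{n+i}(u)=u_i^2$), and the map $\Psi$ is the one given by (\ref{Def_psi_n}): $\psi_i(v)=v_i^2-v_{n+i}$ for $i=1,\dots,n$. Each $\psi_i$ is a strictly convex $\cC^1$ function of $v$, and $\Psi(\phi(u))=0$ on $K$ by construction, so the standing convexity hypothesis of Theorem \ref{main} holds automatically; no extra assumption on $K$ beyond convexity is needed here. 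It remains to show that assumptions 1 and 2 of Theorem \ref{sec} force $\cN(c(x),Q(x))\subset\cN(K,\phi)$ for every fixed $x$.

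Next I would unwind both sets. Writing $v=(\overline v,\widetilde v)\in\R^n\times\R^n$, the constraint $Qv=0$ reads $Q_1\overline v+Q_2\widetilde v=0$, i.e. (using that $Q_1$ is non-singular and the definition $D=-(Q_1)^{-1}Q_2$ in (\ref{Def_DEU})) $\overline v=D\widetilde v$; and $cv\le 0$ becomes $c_1\overline v+c_2\widetilde v=(c_1D+c_2)\widetilde v=E\widetilde v\le 0$. Hence $\cN(c(x),Q(x))$ is parametrized by $\widetilde v\in\R^n$ with $E\widetilde v\le 0$, via $v=(D\widetilde v,\widetilde v)$. On the other side, $\nabla\psi_i(\phi(u))=2u_i e_i\otimes\ldots$ in the first $n$ coordinates and $-1$ in the $(n+i)$-th: concretely $\nabla\psi_i(\phi(u))v=2u_i\overline v_i-\widetilde v_i$. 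So, for a candidate $v=(D\widetilde v,\widetilde v)$, the vector $\big(\nabla\psi_i(\phi(u))v\big)_{i=1}^n$ equals $2\,\mathrm{diag}(u)\,D\widetilde v-\widetilde v=\big(2\sum_i u_i e_i\otimes e_i D-\mathrm{id}\big)\widetilde v=U(\phi(u),x)\,\widetilde v$, which is exactly why $U$ was defined the way it was in (\ref{Def_DEU}). Membership of $v$ in $\cN(K,\phi)$ thus says: for each $u\in K$, either $U\widetilde v=0$ or some component of $U\widetilde v$ is strictly positive.

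With these identifications the implication to prove is purely linear-algebraic: if $E\widetilde v\le 0$ then for every $u\in K$ the vector $U\widetilde v$ is either zero or has a positive component. I would argue by contraposition: suppose for some $u\in K$ the vector $w:=U\widetilde v$ is nonzero with all components $\le 0$, i.e. $w\le 0$, $w\neq 0$. Since $U$ is non-singular (assumption 1), $\widetilde v=U^{-1}w$, and then $E\widetilde v=EU^{-1}w=(U^{-T}E)^{T}w$. By assumption 2, $U^{-T}E<0$ componentwise, and $w\le 0$ with $w\neq 0$, so $(U^{-T}E)^{T}w=\sum_j (U^{-T}E)_j w_j>0$ — a strictly positive number — contradicting $E\widetilde v\le 0$. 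Therefore no such $u$ and $w$ exist, which is precisely $v\in\cN(K,\phi)$. This establishes (\ref{Condition_inTeoMain}) and Theorem \ref{main} then yields existence of an optimal control for $(P)$.

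The routine but slightly delicate point is bookkeeping the transpositions and the sign in the pairing $E\widetilde v = (U^{-T}E)^{T} w$ so that "strict negativity of $U^{-T}E$" matches "$w\le 0$, $w\neq0$" to give a strict inequality of the correct sign; everything else is a direct substitution into the definitions. The one genuine thing to keep in mind — and I expect this is where care is needed — is the degenerate alternative "$U\widetilde v=0$": since $U$ is non-singular this can only happen when $\widetilde v=0$, hence $v=0$, which trivially lies in $\cN(K,\phi)$, so it causes no trouble, but it must be explicitly dispatched rather than silently ignored.
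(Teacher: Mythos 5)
Your proposal is correct and follows essentially the same route as the paper: reduce to Theorem \ref{main} via the map $\Psi$ of (\ref{Def_psi_n}), decompose $v=(D\widetilde v,\widetilde v)$ with $E\widetilde v\le 0$, and identify $\nabla\Psi(\phi(u))v=U\widetilde v$. The only difference is that you spell out the final step (the pairing $E\widetilde v=\langle U^{-T}E,w\rangle>0$ under $w\le0$, $w\ne0$, plus the degenerate case $U\widetilde v=0$), which the paper dismisses as "an elementary Linear Algebra exercise"; your version is the more complete one.
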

 
As a more specific example of the kind of existence result that can be obtained through this approach, we state the following corollary whose proof amounts to going carefully through the arithmetic involved after Theorem \ref{sec}. 

\begin{corollary}\label{ter}
Consider the optimal control problem 
$$%\begin{equation}
\hbox{Minimize in }u:\quad \int_0^T [c_1(x(t))(u_1(t))^2+c_2(x(t))u_2(t)^2]\,dt
$$%\end{equation}
under
\begin{gather}
x_1'(t)=u_1(t)-u_2(t)+q_{1}(x)u_1(t)^2+u_2(t)^2\nonumber,\\
x_2'(t)=q_2(x)u_1(t)+u_2(t)+u_1(t)^2+u_2(t)^2,\nonumber
\end{gather}
and an initial condition $x(0)=x_0$, 
where $u(t)\in K=[0,1]^2$,
$$q_1(x)\in(\frac{1}{3},1),\quad q_2(x)\in(-1,1),\quad  c_1(x),c_2(x)>0,$$ and

$$\frac{q_1(x)+1}{2}c_2(x)<c_1(x)<\frac{2(q_1(x))^2+q_1(x)(q_2(x)+1)-q_2(x)-3}{4(q_1(x)-1)}c_2(x).$$
Then there is, at least, one optimal solution of the problem.
\end{corollary}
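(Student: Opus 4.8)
The plan is to verify that the stated optimal control problem is a special case of $(P)$ and that hypotheses (1) and (2) of Theorem \ref{sec} hold for the given data; then the conclusion follows immediately. First I would identify the ingredients: here $n=2$, $K=[0,1]^2$, the cost weights are $c_1(x)=(c_1(x),c_2(x))$ with the linear part $c_1$-vector being zero (there are no terms linear in $u_i$ in the cost, so $c_1=(0,0)$ in the notation of (\ref{Def_Q})) and $c_2(x)=(c_1(x),c_2(x))$ the quadratic weights. From the state equations I would read off
\begin{equation*}
Q_1(x)=\begin{pmatrix}1&-1\\ q_2(x)&1\end{pmatrix},\qquad Q_2(x)=\begin{pmatrix}q_1(x)&1\\ 1&1\end{pmatrix},\qquad Q_0\equiv 0.
\end{equation*}
Since $\det Q_1=1+q_2(x)>0$ for $q_2(x)\in(-1,1)$, the matrix $Q_1$ is non-singular, so $(P)$ applies. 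Then I would compute $D(x)=-(Q_1)^{-1}Q_2$ and $E(x)=c_1 D+c_2=c_2=(c_1(x),c_2(x))$ (because $c_1=0$), and finally $U(m,x)=2\sum_i m_i e_i\otimes e_i D-\mathrm{id}$, which is $2\,\mathrm{diag}(u_1^2,u_2^2)D-\mathrm{id}$ since $m=\phi(u)=(u_1,u_2,u_1^2,u_2^2)$ and only the quadratic components $m_3,m_4$ (i.e. $u_1^2,u_2^2$) enter the relevant block — I would be careful here about exactly which components of $m$ the sum ranges over, matching the indexing used in (\ref{Def_DEU}).

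Next I would carry out the arithmetic: invert $Q_1$ explicitly, form $D(x)$ as a $2\times 2$ matrix with entries rational in $q_1,q_2$, and then write $U$ as an explicit $2\times 2$ matrix whose entries are affine in $u_1^2$ and $u_2^2$ with coefficients rational in $q_1(x),q_2(x)$. Hypothesis (1) then becomes: $\det U\neq 0$ for all $(u_1,u_2)\in[0,1]^2$ and all admissible $q_1,q_2$. Because $\det U$ is (at most) bilinear in $(u_1^2,u_2^2)\in[0,1]^2$, it suffices to control its sign on that square, and the ranges $q_1(x)\in(\tfrac13,1)$, $q_2(x)\in(-1,1)$ should be exactly what is needed to keep $\det U$ away from zero (presumably strictly of one sign). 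For hypothesis (2), I would compute $U^{-T}E$ — a $2$-vector, rational in the entries of $U$ and hence in $u_1^2,u_2^2,q_1,q_2,c_1(x),c_2(x)$ — and require both components to be negative on $[0,1]^2$. Monotonicity in $u_1^2,u_2^2$ (or a direct extreme-point check, since the problematic behavior occurs at corners of the square) reduces this to finitely many inequalities among $q_1,q_2,c_1,c_2$; the two-sided bound on $c_1(x)/c_2(x)$ in the statement is precisely the envelope of those inequalities, the lower bound $\tfrac{q_1+1}{2}c_2<c_1$ and the upper bound $c_1<\tfrac{2q_1^2+q_1(q_2+1)-q_2-3}{4(q_1-1)}c_2$ coming from the two worst corners (note $q_1-1<0$, so the upper fraction is positive given the sign of its numerator). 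Once both hypotheses are checked, Theorem \ref{sec} yields a solution.

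The main obstacle I anticipate is purely computational bookkeeping rather than any conceptual difficulty: correctly tracking the $(Q_1)^{-1}Q_2$ product, the transpose in $U^{-T}$, and the indexing of $m=\phi(u)$ so that the ``$\sum_i m_i e_i\otimes e_i$'' term picks up $\mathrm{diag}(u_1^2,u_2^2)$ and not $\mathrm{diag}(u_1,u_2)$. A secondary point needing care is the reduction of the sign conditions on $\det U$ and on the components of $U^{-T}E$ from ``for all $(u_1,u_2)\in[0,1]^2$'' to finitely many inequalities: I would justify this either by noting linearity/bilinearity in $(u_1^2,u_2^2)$ (so the extrema over the square $[0,1]^2$ are attained at its four vertices) or by an explicit monotonicity argument, and then verify that the displayed hypotheses on $q_1,q_2,c_1,c_2$ imply each of those vertex inequalities. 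Everything else is routine substitution.
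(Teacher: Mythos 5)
Your plan is exactly the paper's: the paper offers no computation for Corollary \ref{ter} beyond the remark that its proof ``amounts to going carefully through the arithmetic'' of checking hypotheses (1)--(2) of Theorem \ref{sec}, and your identification of $Q_1$, $Q_2$, $Q_0=0$, the vanishing linear cost block, $E=c_2=(c_1(x),c_2(x))$, and the non-singularity of $Q_1$ via $\det Q_1=1+q_2>0$ is all correct, as is the reduction to vertex inequalities by (bi)linearity.

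One correction on the point you flagged and then resolved the wrong way: in $U(m,x)=2\sum_i m_ie_i\otimes e_i\,D-\mathrm{id}$ the sum runs over $i=1,\dots,n$ (the $e_i$ form the canonical basis of $\R^n$), so it picks up the \emph{linear} components $m_1=u_1$, $m_2=u_2$ of $m=\phi(u)=(u,u^2)$, giving $U=2\,\mathrm{diag}(u_1,u_2)D-\mathrm{id}$, not $2\,\mathrm{diag}(u_1^2,u_2^2)D-\mathrm{id}$. This is forced by the proof of Theorem \ref{sec}: there $\psi_i(m)=m_i^2-m_{n+i}$, so $\nabla\Psi(m)=\bigl(2\,\mathrm{diag}(m_1,\dots,m_n)\ \ -\mathrm{id}\bigr)$ and $\nabla\Psi(m)v=Uv_2$ requires the diagonal factor to be the first block of $m$. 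For the present corollary the slip is harmless, since $(u_1,u_2)\mapsto(u_1^2,u_2^2)$ maps $K=[0,1]^2$ onto itself and the family of matrices $U$ to be tested is therefore unchanged; but for a general box $K$ the two readings give different conditions, so the indexing should be stated correctly before the vertex check.
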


Our strategy to prove these results is not new as it is based on the well-established philosophy of relying on relaxed versions of the original problem, and then, under suitable assumptions, prove that there are solutions of the relaxed problem which are indeed solutions of the original one (\cite{Ck}, \cite{G}, \cite{Ms}, \cite{Ms1}, \cite{W} and \cite{Y}). From this perspective, it is a very good example of the power of relaxed versions in optimization problems.

The relaxed version of the problem that we will be using is formulated in terms of Young measures associated with sequences of admissible controls. These so-called parametrized measures where introduced by
L. C. Young (\cite{Y}, \cite{Y1} and \cite{Y2}), and have been extensively used in Calculus of Variations and Optimal Control Theory
 ( see for example \cite{MP}, \cite{P1}, \cite{Rbk1} and \cite{Rbk3}). Because of the special structure of the dependence on $u$, we will be concerned with (generalized) ``moments" of such probability measures. Namely, the set
\begin{equation}\label{Def_L}
L=\left\{ m\in\R^s: m_i=\phi_i(u), 1\le i\le s, u\in K\right\}, 
\end{equation}
and the space of moments
\begin{equation}\label{Def_Lambda}
\Lambda=\left\{ m\in\R^s: m_i=\int_K\phi_i(\lambda)\,d\mu(\lambda), 1\le i\le s, \mu\in P(K)\right\}
\end{equation}
will play a fundamental role.  Here $P(K)$ is the convex set of all probability measures supported in $K$. Since the mapping
$$
M:\mu\in P(K)\mapsto\Lambda,\quad M(\mu)=\int_K\phi(\lambda)\,d\mu(\lambda)
$$
is linear, we easily conclude that $\Lambda$ is a convex set of vectors, and, in addition, that the set of its extreme points is contained in $L$. In fact, for some particular $\phi_i$'s of polynomial type, the set of the extreme points of $\Lambda$ is precisely $L$. We examine and comment on the set $\Lambda$ in Section~\ref{Sec_Lambda_duality}. This is closely related to the classical moment problem (\cite{A}, \cite{ST} or more recently \cite{EMP}, \cite{Mz}).

A crucial fact in our strategy is the following.  

\begin{assumption}\label{princ} For each fixed $x\in\R^N$, and  $\xi\in Q(x)\Lambda\subset\R^N$, the minimum
$$
\min_{m\in\Lambda}\left\{ c(x)\cdot m: \xi=Q(x)m\right\}
$$
is only attained in $L$.
\end{assumption}

It is interesting to realize the meaning of this assumption. If we drop the linear constraint $\xi=Qm$ on the above minimum, then the minimum is always attained in a certain point in $L$ simply because a linear function on a convex set will always take its extreme values on extreme points of such convex set. However, precisely the presence of the linear constraint $\xi=Qm$ makes the hypothesis meaningful as the extreme points of the section of $\Lambda$ by such set of linear constraints may not (indeed most of the time they do not) belong to $L$, so that the extreme points of the linear function $c\cdot m$ over such convex section may not attain its minimum on $L$. Our main hypothesis establishes that this should be so, and fundamentally, that the minimum is only attained in $L$.

Under this assumption, and the other technical requirements indicated at the beginning, one can show a general existence theorem of optimal solutions for our problem.

\begin{theorem}\label{gen}
Under Assumption \ref{princ} and the additional well-posedness hypotheses on $(c,Q)$ indicated above, the initial optimal control problem $(P_1)$ admits a solution.
\end{theorem}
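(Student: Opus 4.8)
The plan is to establish Theorem~\ref{gen} by the classical relaxation route: pass to a relaxed problem formulated over Young measures (equivalently, over the moment set $\Lambda$), show the relaxed problem has a solution by direct methods, and then use Assumption~\ref{princ} to conclude that this relaxed solution is in fact generated by a genuine control, hence solves $(P_1)$.

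\textbf{Step 1: Formulation of the relaxed problem.} First I would replace controls $u\in L^\infty(0,T)$ with $u(t)\in K$ by Young measures $t\mapsto\mu_t\in P(K)$, and accordingly replace $\phi(u(t))$ by the moment $m(t)=\int_K\phi(\lambda)\,d\mu_t(\lambda)\in\Lambda$. The relaxed state equation becomes $x'(t)=Q(x(t))m(t)$ with $x(0)=x_0$, and the relaxed cost becomes $\int_0^T c(x(t))\cdot m(t)\,dt$. Because the dependence on $\phi(u)$ is \emph{linear} in both cost and dynamics, this is exactly the convexification in the variable $m$: the relaxed problem is to minimize $\int_0^T c(x)\cdot m\,dt$ over measurable $m:(0,T)\to\Lambda$ subject to $x'=Q(x)m$. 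Here $\Lambda$ is compact (continuous image of the weak-$*$ compact $P(K)$) and convex. I would note that the relaxed problem is a genuine relaxation: every admissible $u$ gives an admissible $m=\phi(u(t))\in L\subset\Lambda$ with the same cost and trajectory, so $\inf(P_1^{\mathrm{relaxed}})\le\inf(P_1)$.

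\textbf{Step 2: Existence for the relaxed problem.} Take a minimizing sequence $m_k:(0,T)\to\Lambda$ with associated trajectories $x_k$. Since $\Lambda$ is bounded, $\{m_k\}$ is bounded in $L^\infty(0,T;\R^s)\subset L^2$, so along a subsequence $m_k\rightharpoonup m$ weakly in $L^2$; by convexity and closedness of $\Lambda$ (Mazur), $m(t)\in\Lambda$ a.e. The trajectories $x_k$ are uniformly bounded and equicontinuous (using the Lipschitz/technical hypotheses on $Q$ and boundedness of $m_k$), so $x_k\to x$ uniformly by Arzel\`a--Ascoli, and one passes to the limit in the integral form $x_k(t)=x_0+\int_0^t Q(x_k(s))m_k(s)\,ds$ using uniform convergence of $Q(x_k)$ against weak convergence of $m_k$. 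Lower semicontinuity of the cost is immediate since $\int_0^T c(x_k)\cdot m_k\,dt\to\int_0^T c(x)\cdot m\,dt$ (again strong times weak). Hence $(x,m)$ solves the relaxed problem.

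\textbf{Step 3: Upgrading the relaxed solution to a classical one.} This is where Assumption~\ref{princ} enters, and it is the crux. Given the optimal relaxed pair $(x,m)$, define $\xi(t)=x'(t)=Q(x(t))m(t)$. For a.e.\ $t$, consider the finite-dimensional problem $\min\{c(x(t))\cdot \tilde m:\tilde m\in\Lambda,\ Q(x(t))\tilde m=\xi(t)\}$. I claim $m(t)$ attains this minimum: if some $\tilde m$ did strictly better, replacing $m$ by $\tilde m$ on a small set would lower the cost without changing the trajectory (the state equation only sees $Q(x)m=\xi$), contradicting optimality; making this rigorous requires a measurable-selection argument so that the pointwise minimizer depends measurably on $t$, plus a perturbation/variational inequality argument showing the optimal $m(t)$ realizes the pointwise minimum a.e. By Assumption~\ref{princ}, every minimizer of that constrained problem lies in $L$, so $m(t)\in L$ a.e., i.e.\ $m(t)=\phi(u(t))$ for some $u(t)\in K$. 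A measurable selection theorem (Filippov) then produces a measurable, hence (since $K$ is compact) $L^\infty$, control $u$ with $\phi(u(t))=m(t)$ a.e. This $u$ is admissible for $(P_1)$, its trajectory is $x$, and its cost equals the relaxed optimal value, which is $\le\inf(P_1)$; therefore $u$ is optimal for $(P_1)$.

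\textbf{Main obstacle.} The genuine difficulty is Step~3, and specifically the measurability bookkeeping: showing that the optimal relaxed $m(t)$ actually solves the \emph{pointwise} constrained minimization problem for a.e.\ $t$ (a variational/needle-perturbation argument in $L^\infty$), and then invoking measurable selection to recover a legitimate control $u$ from the relation $m(t)\in L$. Steps~1 and~2 are standard direct-method arguments relying only on compactness of $\Lambda$, the linear appearance of $m$, and the stated well-posedness hypotheses on $(c,Q)$; the whole weight of the theorem is carried by Assumption~\ref{princ}, which converts "the relaxed minimum is attained at an extreme-type point" into "the relaxed minimum is attained at a point of $L$," and hence by a true control.
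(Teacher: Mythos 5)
Your proposal is correct and follows essentially the same relaxation route as the paper: pass to the moment formulation over $\Lambda$, obtain a relaxed solution, and use Assumption~\ref{princ} to force the optimal moment $m(t)$ into $L$ for a.e.\ $t$, thereby recovering a genuine admissible control that is optimal for $(P_1)$. The only difference is one of bookkeeping: the paper channels the pointwise constrained minimization through the variational reformulation $(P_4)$ with density $\varphi(x,\xi)=\min\{c(x)\cdot m:\ \xi=Q(x)m\}$ and cites the literature for the existence of relaxed solutions and the equivalence of the formulations, whereas you prove the existence by the direct method and the pointwise optimality by a perturbation plus measurable-selection argument; the mathematical content is the same.
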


Notice that we are not assuming any convexity on the set $K$ in this statement. The proof of this theorem can be found in Section~\ref{Sec_proof_of_theor}. As remarked before, the proof is more-or-less standard, and it involves the use of an appropriate relaxed formulation of the problem in terms of moments of Young measures (\cite{MP}, \cite{Rbk3}).

Condition (\ref{Condition_inTeoMain}) in Theorem \ref{main} is nothing but a sufficient condition to ensure Assumption \ref{princ} in a more explicit way. As a matter of fact, all of our efforts are directed towards finding in various ways more explicit conditions for the validity of this assumption. In this vein, 
the rest of the paper focuses on exploring more fully our Assumption \ref{princ} either through duality, geometric arguments, or in order to prove Theorem \ref{main}. Ideally, one would like to provide explicit results saying that for a certain set $\cM$, Assumption \ref{princ} holds if for each $x\in\R^N$, $(c(x), Q(x))\in\cM$. 
In fact, by looking at Assumption \ref{princ} from the point of view of duality, one can write a general statement whose proof is a standard exercise.

\begin{proposition}\label{propdual}
If for any $x\in \R^N$, $(c, Q)=(c(x),Q(x))$ are such that  for every $\eta\in\R^N$ there is a unique $m(\eta)\in L$ solution of the problem
\begin{equation}\label{Condition_inProp}
\hbox{Minimize in }m\in L:\quad (c+\eta Q)m
\end{equation}
then Assumption \ref{princ} holds.
\end{proposition}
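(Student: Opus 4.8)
The plan is to derive Assumption \ref{princ} from the hypothesis of the proposition by a standard duality/Lagrangian argument relating the constrained minimization over $\Lambda$ to a family of unconstrained minimizations over $L$. Fix $x\in\R^N$ and abbreviate $c=c(x)$, $Q=Q(x)$. Let $\xi\in Q\Lambda$, and let $\overline m\in\Lambda$ be any minimizer of $c\cdot m$ subject to $\xi=Qm$; such a minimizer exists because $\Lambda$ is compact (it is the continuous linear image of the weak-$*$ compact set $P(K)$) and the feasible set is a closed subset of it. I want to show $\overline m\in L$.

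First I would produce a Lagrange multiplier $\eta\in\R^N$ for the linear equality constraint $\xi=Qm$. Since both the objective $m\mapsto c\cdot m$ and the constraint are linear (affine) and $\Lambda$ is convex and compact, linear programming duality (or a separation argument in $\R^s$) gives an $\eta\in\R^N$ such that $\overline m$ minimizes $m\mapsto (c+\eta Q)\cdot m$ over all of $\Lambda$, not merely over the feasible slice. The key point enabling this is that there is no constraint qualification issue to worry about in finite dimensions for linear programs with bounded feasible region; one simply needs $\xi\in Q\Lambda$, i.e. feasibility, which holds by assumption. This is the step I expect to require the most care to state cleanly, although it is genuinely routine: one must make sure the multiplier exists even when $Q$ is not surjective, which is handled by passing to the range of $Q$ and using that the feasible set is nonempty.

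Next, having fixed this $\eta$, I would exploit the structure of $\Lambda$. Because $M:\mu\mapsto\int_K\phi\,d\mu$ is linear and surjective onto $\Lambda$, minimizing the linear functional $(c+\eta Q)\cdot m$ over $\Lambda$ is the same as minimizing $\mu\mapsto\int_K (c+\eta Q)\cdot\phi(\lambda)\,d\mu(\lambda)$ over $\mu\in P(K)$, whose value equals $\min_{\lambda\in K}(c+\eta Q)\cdot\phi(\lambda)=\min_{m\in L}(c+\eta Q)\cdot m$, attained by Dirac masses at minimizers of the integrand. Thus the minimal value of $(c+\eta Q)\cdot m$ over $\Lambda$ is attained at points of $L$, and by the uniqueness hypothesis of the proposition there is exactly one such point, call it $m(\eta)\in L$. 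Any minimizer $\mu$ of the integral functional must be supported on the set $\{\lambda\in K:(c+\eta Q)\cdot\phi(\lambda)\text{ is minimal}\}=\{\lambda:\phi(\lambda)=m(\eta)\}$, and hence its moment is exactly $m(\eta)$: in other words, $\overline m=m(\eta)\in L$.

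Finally I would observe that this forces $\overline m$ to be the \emph{unique} minimizer of $c\cdot m$ over the feasible slice $\{m\in\Lambda:\xi=Qm\}$: if $\widetilde m$ were another minimizer, the same argument with the \emph{same} multiplier $\eta$ (valid since $\widetilde m$ is also feasible and optimal) would give $\widetilde m=m(\eta)=\overline m$. Since $x$ and $\xi\in Q(x)\Lambda$ were arbitrary, Assumption \ref{princ} holds. The only genuinely non-mechanical ingredient is the existence of the multiplier $\eta$, i.e. strong duality for the linear program over the compact convex set $\Lambda$; everything else is the elementary fact that a linear functional on $P(K)$ achieves its minimum at Dirac masses together with the proposition's uniqueness assumption.
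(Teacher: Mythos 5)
Your argument is correct and follows essentially the same route as the paper's own sketch: both pass from the constrained minimization over $\Lambda$ to the unconstrained linear minimization of $(c+\eta Q)\cdot m$ via a Karush--Kuhn--Tucker multiplier $\eta$, and then invoke the hypothesis that this linear functional has a unique minimizer in $L$ (hence, by Proposition~\ref{L_extLambda}, a unique minimizer in $\Lambda$, which must coincide with the constrained minimizer). The only place where you are no more careful than the paper is the existence of the multiplier itself --- your claim that boundedness of the feasible region removes all constraint-qualification issues is not quite right for non-polyhedral $\Lambda$ when $\xi$ lies on the relative boundary of $Q\Lambda$ --- but the paper makes exactly the same leap (``the associated Karush--Kuhn--Tucker vector $\bar\eta$ verifies\dots''), so your proposal matches it in both substance and rigor.
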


We briefly comment on this in Section~\ref{Sec_Lambda_duality}.
One then says that $(c, Q)\in\cM$ if this pair verifies the condition on this proposition.  
A full analysis of this set $\cM$ turns out to depend dramatically 
on the ingredients of the problem. In particular, we will treat the cases $n=N=1$, and the typical situation of algebraic moments of degree 2 and 3 in Section~\ref{Sec_p=2}, Section~\ref{Sec_p=3}, and Section~\ref{Sec_Yet_p=3}. In Section~\ref{Sec_Examples} we apply our results to a few explicit examples and compare it with the application of the classical Filippov-Roxin theory.

Situations where either $N>1$ or $n>1$ are much harder to deal with, specially because existence results are more demanding on the structure of the underlying problem. In particular, we need a convexity assumption on how the non-linear dependence on controls occurs. We found that  (\ref{Condition_inTeoMain}) turns out to be a general sufficient condition for the validity of Assumption \ref{princ}, thus permitting to prove Theorem \ref{main} based on Theorem \ref{gen}. Both Theorem \ref{sec}, and Corollary \ref{ter} follow then directly from Theorem \ref{princ} after some algebra. This can be found in Section~\ref{Sec_Multi_Dim}. 

Finally, we would like to point out that one particular interesting example, from the point of view of applications, that adapts to our results comes from the control of underwater vehicles (submarines). See \cite{Bm}, \cite{Fs}, and \cite{HL}. This served as a clear motivation for our work. We plan to go back to this problem in the near future. 

\section{Proof of Theorem \ref{gen}}\label{Sec_proof_of_theor}
Consider the following four formulations of the same underlying optimal control problem. 
\begin{description}
 \item[$(P_1)$] The original optimal control problem described in (\ref{original1})-(\ref{original3}).
\item[$(P_2)$] The relaxed formulation in terms of Young measures  (\cite{MP}, \cite{P1}, \cite{Rbk1}, \cite{Rbk3}) associated with sequences of admissible controls:
$$
\hbox{Minimize in }\mu=\{\mu_t\}_{t\in(0, T)}:\quad \tilde{I}(\mu)=\int_0^T [\int_K\sum_{ i}c_{i}(x(t))\phi_i(\lambda)\,d\mu_t(\lambda)]\,dt
$$
subject to
$$
x'(t)=\int_K\sum_{i}Q_{i}(x(t))\phi_i(\lambda)\,d\mu_t(\lambda)
$$
and
$$
\hbox{supp}(\mu_t)\subset K, \quad x(0)=x_0\in\R^N.
$$
\item[$(P_3)$] The above relaxed formulation $(P_2)$ rewritten by taking advantage of the moment structure of the cost density and the state equation. If we put 
$c=(c_1,...,c_s)\in\R^s$, $Q\in M_{N\times s}$  and $m$ such that  
$$m_i=\int_K \phi_i(\lambda)\,d\mu_t(\lambda)\ \forall i\in\{1,...,s\},$$
then we pretend to
$$
\hbox{Minimize in }m\in\Lambda:\quad \int_0^T c(x(t))\cdot m(t)\, dt 
$$
subject to 
$$
x'(t)=Q(x(t))m(t),\quad x(0)=x_0.
$$
\item[$(P_4)$] Variational reformulation  of formulation $(P_3)$ (\cite{C1}, \cite{P}, \cite{PT}, \cite{R1}). This amounts to defining an appropriate density by setting
$$
\varphi(x,\xi)=\min_{m\in \Lambda}\{c(x)\cdot m \ : \xi=Q(x)m\}.
$$
Then we would like to 
$$
\hbox{Minimize in }x(t):\quad \int_0^T\varphi(x(t), x'(t))\,dt
$$
subject to $x(t)$ being Lipschitz in $(0, T)$ and $x(0)=x_0$. 
\end{description}

We know that the three versions of the problem $(P_2)$, $(P_3)$, and $(P_4)$ admit solutions because they are relaxations of the original problem $(P_1)$. In fact, since $K$ is compact, $(P_2)$ is a particular case of the relaxed problems studied in \cite{MP} and \cite{Rbk3}. The existence of solution for the linear optimal control problem $(P_3)$ is part of the classical theory (\cite{C1}). Indeed, $(P_3)$ is nothing but $(P_2)$ rewritten in terms of moments, so that the equivalence is immediate. $(P_4)$ is the reformulated problem introduced in \cite{R1} whose equivalence to $(P_3)$ was largely explored in \cite{C1} and \cite{P}, \cite{PT}. 

Let $\tilde x$ be one such solution of $(P_4)$. By Assumption \ref{princ} applied to a. e. $t\in(0, T)$, we have
$$
\varphi(\tilde{x}(t),\tilde{x}'(t))=\min_{m\in\Lambda}\{c(\tilde{x}(t))\cdot m(t): \  \tilde{x}'(t)=Q(\tilde{x}(t))m(t)\}=c(\tilde{x}(t))\cdot\tilde{m}(t)
$$
for a measurable  $\tilde{m}(t)\in L$, a solution of $(P_3)$ (see \cite{P}). The fundamental fact here (through Assumption \ref{princ}) is that $\tilde m(t)\in L$ for a.e. $t\in(0, T)$, and this in turn implies that $\tilde m(t)$ is the vector of moments of an optimal Dirac-type Young measure $\mu=\{\mu_t\}_{t\in(0, T)}=\{\delta_{\tilde u(t)}\}_{t\in(0, T)}$ for an admissible $\tilde u$ for $(P_1)$. This admissible control $\tilde u$ is optimal for $(P_1)$. This finishes the proof.

\section{The set $\Lambda$ and duality}\label{Sec_Lambda_duality}
The moment set $\Lambda$ deserves some comments before proceeding further. Consider the mapping 
$\phi$ as in (\ref{Def_functions}) and $L$ as in (\ref{Def_L}).

We can regard $L$ as part of an embedded $n$-manifold in $\R^s$, $s>n$, and $\phi$ its standard or canonical parametrization. The moment set $\Lambda$ defined in (\ref{Def_Lambda}) is contained in the convex hull of this manifold.

The most important fact about $\Lambda$ that one may need in our analysis is stated in the next proposition. %Its proof is elementary and based on the (strict) convexity of the second moments. 

\begin{proposition}\label{L_extLambda}
The set of extreme points of $\Lambda$ is contained in $L$.
\end{proposition}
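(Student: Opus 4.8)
The plan is to show that any point of $\Lambda$ which is \emph{not} in $L$ can be written as a proper convex combination of two distinct points of $\Lambda$, hence cannot be extreme. Recall that $\Lambda = M(P(K))$ where $M(\mu) = \int_K \phi(\lambda)\,d\mu(\lambda)$ is linear, and that $L = \phi(K) = \{\phi(u) : u \in K\}$ consists exactly of the moment vectors of Dirac masses $\delta_u$, $u \in K$. So I would start from an arbitrary extreme point $m^* \in \Lambda$, fix a probability measure $\mu \in P(K)$ with $M(\mu) = m^*$, and aim to conclude that $\mu$ must be a Dirac mass, whence $m^* = \phi(u) \in L$ for the corresponding $u$.

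The key step is the following dichotomy. If $\mu$ is not a Dirac mass, then its support contains at least two points, and one can split $K$ into two Borel sets $A_1, A_2$ with $\mu(A_1), \mu(A_2) \in (0,1)$ and $\mu(A_1) + \mu(A_2) = 1$. Setting $\mu_j(\cdot) = \mu(\cdot \cap A_j)/\mu(A_j)$ for $j = 1,2$ gives two probability measures on $K$ with $\mu = \mu(A_1)\mu_1 + \mu(A_2)\mu_2$, and by linearity of $M$,
\begin{equation}\label{splitting}
m^* = \mu(A_1)\,M(\mu_1) + \mu(A_2)\,M(\mu_2),
\end{equation}
a convex combination of the two points $M(\mu_1), M(\mu_2) \in \Lambda$ with strictly positive weights. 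Since $m^*$ is extreme, \eqref{splitting} forces $M(\mu_1) = M(\mu_2) = m^*$. To derive a contradiction — and thereby conclude $\mu$ is Dirac — I would choose the splitting cleverly: pick two distinct points $a, b$ in $\mathrm{supp}(\mu)$ and use that $\phi$ separates them in at least one coordinate, say $\phi_k(a) \ne \phi_k(b)$. By continuity of $\phi_k$, one can choose a half-space in $\R^s$ (equivalently, a level set of $\phi_k$) or simply small disjoint neighbourhoods $B_a, B_b$ of $a$ and $b$ on which $\phi_k$ takes values in disjoint intervals; taking $A_1 = B_a$ and $A_2 = K \setminus B_a$ (with $B_b \subset A_2$) yields $\int \phi_k\,d\mu_1 \ne \int \phi_k\,d\mu_2$, contradicting $M(\mu_1) = M(\mu_2)$. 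Hence $\mu$ cannot charge two distinct points of $K$, so $\mu = \delta_u$ for some $u \in K$ and $m^* = \phi(u) \in L$.

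The main obstacle, and the point requiring a little care, is the measure-theoretic bookkeeping in the separation argument: one must make sure that after intersecting $\mathrm{supp}(\mu)$ with the chosen neighbourhoods one still has $\mu(B_a) > 0$ and $\mu(B_b) > 0$ (which holds precisely because $a, b$ lie in the support), and that the two conditional measures are genuinely distinct \emph{as detected by $\phi$} — it is not enough that $\mu_1 \ne \mu_2$ as measures, since $M$ is not injective on $P(K)$ in general; one really needs the chosen coordinate $\phi_k$ to distinguish them. Everything else — linearity of $M$, convexity of $\Lambda$, the definition of extreme point — is routine. Note that this argument does not use any convexity of $K$, consistent with the generality claimed for the surrounding results.
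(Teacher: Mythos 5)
Your overall strategy --- take an extreme point $m^*$, a representing measure $\mu$, split $\mu$ into two conditional pieces of positive mass, and play the resulting proper convex combination against extremality --- is sound, and it is a genuinely different route from the paper's, which simply quotes $\Lambda=\overline{co(L)}=co(L)$ (compactness of $K$, hence of $L=\phi(K)$, citing \cite{Mz}) and then the standard convex-analysis fact that the extreme points of the convex hull of a compact set lie in that set. Your version re-proves that fact from scratch at the level of measures, which is more self-contained, but that is also where the trouble lies.

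The gap is in the separation step you actually commit to. Taking $A_1=B_a$ and $A_2=K\setminus B_a$ does \emph{not} yield $\int\phi_k\,d\mu_1\neq\int\phi_k\,d\mu_2$: the average of $\phi_k$ over everything outside $B_a$ can perfectly well coincide with its average over $B_a$, even though $\phi_k(a)\neq\phi_k(b)$. Concretely, with $K=[-1,1]$, $\phi_k(u)=u$, $\mu=\frac12\delta_{-1/2}+\frac14\delta_0+\frac14\delta_{-1}$, $a=-1/2$, $b=0$ and $B_a$ a small ball around $-1/2$, one gets $\int\phi_k\,d\mu_1=-1/2=\int\phi_k\,d\mu_2$. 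So the contradiction with $M(\mu_1)=M(\mu_2)$ does not follow from this choice of splitting. The half-space option you mention only in passing is the one that works: put $A_1=\{u\in K:\phi_k(u)<c\}$ and $A_2=\{u\in K:\phi_k(u)\geq c\}$ with $c$ strictly between $\phi_k(a)$ and $\phi_k(b)$; both sets have positive $\mu$-measure because $a,b\in\mathrm{supp}(\mu)$ and each contains a relatively open neighbourhood of one of them, and now $\int\phi_k\,d\mu_1<c\leq\int\phi_k\,d\mu_2$ by construction. Two further small repairs: since $\phi$ need not be injective, the correct conclusion is not that $\mu$ is a Dirac mass but that $\phi$ is constant on $\mathrm{supp}(\mu)$, which still gives $m^*=\phi(u_0)\in L$; and an even shorter variant avoids separation altogether --- for $\mu_1^\varepsilon$ the conditioning of $\mu$ on $B(a,\varepsilon)\cap K$ with $a\in\mathrm{supp}(\mu)$, extremality forces $M(\mu_1^\varepsilon)=m^*$ for every $\varepsilon>0$, and letting $\varepsilon\to0$ gives $m^*=\phi(a)\in L$ directly.
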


\begin{proof}
 First notice that, as it was shown in \cite{Mz} in a context similar to ours, the compactness of $K$ implies  
$$\overline{co (L)}=co (L)=\bar{\Lambda}=\Lambda.$$
The fact of $K$ being bounded plays an important role because otherwise $\Lambda$ can be shown to be not necessarily closed (\cite{EMP}).

 Since $\Lambda=co (L)$ then it is known from convex analysis (\cite{R}) that  
$$ext (\Lambda)\subseteq L,$$
where $ext (\Lambda)$ represents the extreme points of $\Lambda$.
\end{proof}

\begin{remark}
 For some $\phi$'s it is possible to conclude that 
$ext (\Lambda)=L$. This is the case, for example when $\phi$ contains all the linear and quadratic terms of a $n$-variable polynomial. However this is not essential in what follows.
\end{remark}

Due to this result the proof of Proposition~\ref{propdual} is standard (see \cite{R}), so that we shall only make a few remarks.

Since
$$
ext (\Lambda)\subseteq co (L)
$$
 which is a compact set, the minimum of 
$$(c+\eta Q)m$$ 
in $\Lambda$ is always attained at least in one point of $L$ (it can be attained also in points of $\Lambda\setminus L$). However, if this point happens to be unique, because of Proposition~\ref{L_extLambda}, it is also immediate to check that it must be the unique minimizer in $\Lambda$.

The condition (\ref{Condition_inProp}) in Proposition~\ref{propdual} means that

$$\min_{m\in \Lambda} (c+\eta Q)m= \min_{m\in L} (c+\eta Q)m=(c+\eta Q)\phi(a)$$ for a single $a\in K$, which also verifies
$$\min_{m\in \Lambda} (c+\eta Q)m-\eta\xi= (c+\eta Q)\phi(a)-\eta \xi$$
for $\xi\in Q(x)\Lambda$, that is, such that Assumption~\ref{princ} is non empty.

In particular the associated Karush-Kuhn-Tucker vector $\bar{\eta}$ verifies (see \cite{R})

$$c\cdot\phi(a)+\bar{\eta}(Q\phi(a)-\xi)=\min_{m\in\Lambda}\{c\cdot m:\ Qm=\xi\}=c\cdot \phi(a)$$ for a single $a\in K$ complying with $Q\phi(a)=\xi$. As a consequence, for all admissible $m\in \Lambda$ different from $\phi(a)$, we have 
$$ c\cdot m>c \cdot \phi(a).$$ 

\section{Polynomial Dependence. The case  $N=n=1$, $p=2$}\label{Sec_p=2}

Until Section 8, we concentrate in the situation where 
$$\phi:\R^n\to\R^s$$ is such that 
$\phi_i(u)=u_i$, for $i=1, 2, \dots, n$, and 
$\phi_{n+i}(u)$, $i=1, 2, \dots, s-n$ are convex polynomials of some degree $p$, or simply polynomials whose restriction to $K$ is convex.
We will consider $K$ itself to be convex.

Our goal is to explore different possibilities to apply directly Theorem~\ref{gen} by ensuring Assumption~\ref{princ}. In other words, we will search for functions 
$$
c:\R^N\to\R^s,\quad  Q:\R^N\to\R^{Ns},
$$
such that for every $x\in \R^N$,
$$(c(x),Q(x))\in \cM$$ where $\cM$ represents the set
\begin{equation}\label{Def_M}
\left\{(c,Q):\ \forall\ \xi\in Q\Lambda, \ 
\arg\min_{m\in\Lambda}\{ c\cdot m: \xi=Qm\}\in L\right\} 
\end{equation}

During the following three sections we will focus on the one dimensional case $N=n=1$ and use some ideas based in duality (Proposition~\ref{propdual}) and in geometric interpretations.

In Sections 4, 5, and 6, we explore various scenarios where Assumption \ref{princ} can be derived, and defer explicit examples until Section 7. In particular, we consider in this section the situation where $\phi$ is given by $\phi(a)=(a,a^2)$. We are talking about polynomial components of degree less or equal than $p=2$.

Let $K=[a_1,a_2]$, $L$, and $\Lambda$ as in (\ref{Def_L})-(\ref{Def_Lambda}). Here, we have $s=2$ and 
$$
c:\R\to\R^2,\quad Q:\R\to\R^2
$$
can be identified with vectors in $\R^2$, or more precisely, with plane curves parametrized by $x$. To emphasize that function $Q$ is not a matrix-valued but vector-valued, we will call it $q$.

\vspace{0.5 cm}

\begin{figure}[h]
\centering  
  % Requires \usepackage{graphicx}
\includegraphics[scale=0.4,natwidth=3cm, natheight=4cm]{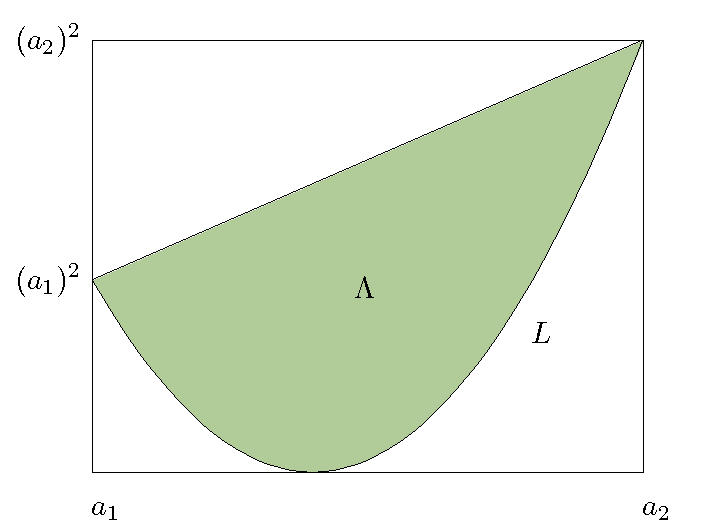} 
 \caption{$\Lambda=co(L)$ for $p=2$}
\end{figure}

\vspace{0.5 cm}
Next we describe sufficient conditions for $(c(x),q(x))\in \cM$.
\begin{lemma}\label{LemaDualidadeGrau2}
Let $K$, $L$ and $\phi$ be as above. For every $x\in \R$, let $q=q(x)$ and $c=c(x)$ be vectors such that one of the following conditions is verified  
\begin{enumerate}
\item $q_1+q_2(a_1+a_2)=0$ and 
$$\hbox{det}\begin{pmatrix} c_1&c_2\\ q_1&q_2\end{pmatrix}\neq 0;$$
\item $q_1+q_2(a_1+a_2)\neq0$ and
$$(q_1+q_2(a_1+a_2))\hbox{det}\begin{pmatrix} c_1&c_2\\ q_1&q_2\end{pmatrix}<0.$$
\end{enumerate}
Then $(c, q)\in\cM$,
 and consequently Assumption~\ref{princ} is verified.
\end{lemma}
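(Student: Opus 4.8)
The plan is to verify the hypothesis of Proposition~\ref{propdual}: for every $\eta\in\R$ the linear functional $(c+\eta q)\cdot m$ attains its minimum over $L$ at a single point $m(\eta)=\phi(a(\eta))$, $a(\eta)\in K$. Since here $s=2$, $n=1$, the set $L=\{(a,a^2):a\in[a_1,a_2]\}$ is a parabolic arc and $\Lambda=\mathrm{co}(L)$ is the closed region bounded by this arc and the chord joining its endpoints (Proposition~\ref{L_extLambda} and the figure). The functional to minimize over $a\in[a_1,a_2]$ is the scalar polynomial $g_\eta(a)=(c_1+\eta q_1)a+(c_2+\eta q_2)a^2$. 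First I would treat this as a one-variable calculus problem: a quadratic in $a$ restricted to a compact interval. The minimum fails to be attained at a unique point only in the degenerate cases — when $g_\eta$ is affine and constant (so $c_2+\eta q_2=0$ and $c_1+\eta q_1=0$ simultaneously), or when $g_\eta$ is a concave (downward) parabola symmetric so that both endpoints give the same value, or when $g_\eta\equiv\text{const}$. The strategy is to show that conditions 1 and 2 exactly rule out every such bad value of $\eta$.

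The key computation is the following: the value $g_\eta$ takes at the two endpoints $a_1,a_2$ coincides precisely when
$$
(c_1+\eta q_1)(a_1-a_2)+(c_2+\eta q_2)(a_1^2-a_2^2)=0,
$$
i.e., dividing by $a_1-a_2\neq0$, when $(c_1+\eta q_1)+(c_2+\eta q_2)(a_1+a_2)=0$. Writing $A=c_1+c_2(a_1+a_2)$ and $B=q_1+q_2(a_1+a_2)$, this says $A+\eta B=0$. Meanwhile $g_\eta$ is a (strictly) concave parabola exactly when $c_2+\eta q_2<0$. So a non-unique minimizer among interior/boundary competitors can only happen for $\eta=-A/B$ (if $B\neq 0$) together with $c_2+\eta q_2\le 0$, or for values of $\eta$ making $g_\eta$ constant. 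I would then substitute $\eta=-A/B$ into $c_2+\eta q_2$ and simplify: one gets $c_2+\eta q_2=(c_2B-q_2A)/B=\big(c_2(q_1+q_2(a_1+a_2))-q_2(c_1+c_2(a_1+a_2))\big)/B=(c_2q_1-c_1q_2)/B=-\det\begin{pmatrix}c_1&c_2\\ q_1&q_2\end{pmatrix}/B$. Hence the concave-and-balanced situation occurs iff $-\det/B\le 0$, i.e. $B\cdot\det\ge 0$ (with $\det\neq 0$ when $B\ne0$), which is precisely the negation of condition 2. In the sub-case $B=0$ (condition 1), the endpoints always give equal values, so one must instead rely on strict convexity $c_2+\eta q_2>0$ for all $\eta$ — but here $q_1$ and $q_2$ satisfy $q_1=-q_2(a_1+a_2)$, the sign of $c_2+\eta q_2$ is not automatically controlled, so the correct reading is that for $B=0$ the relevant degeneracy is $g_\eta$ affine, i.e. $c_2+\eta q_2=0$, giving $\eta=-c_2/q_2$, and then $c_1+\eta q_1 = c_1 - (c_2/q_2)q_1 = (c_1q_2-c_2q_1)/q_2=\det/q_2\ne 0$ by condition 1, so $g_\eta$ is a nonconstant affine function and the minimum is unique at an endpoint. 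I would also dispatch separately the case $q_2=0$ (when $c_2+\eta q_2$ never vanishes unless $c_2=0$, handled by the determinant condition) to keep the bookkeeping honest.

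Having shown that under condition 1 or 2 the polynomial $g_\eta$ is, for every $\eta\in\R$, either strictly convex (unique interior or endpoint minimizer) or a nonconstant affine function (unique endpoint minimizer), I conclude that the minimizer $m(\eta)\in L$ is unique for each $\eta$, so Proposition~\ref{propdual} applies and $(c,q)\in\cM$; Assumption~\ref{princ} then follows by definition of $\cM$ in \eqref{Def_M}. The main obstacle I anticipate is not any single hard estimate but the careful case analysis on the signs of $c_2+\eta q_2$ and on whether $B=q_1+q_2(a_1+a_2)$ and $q_2$ vanish — making sure that the two stated conditions together cover every value of $\eta$ and that the determinant being nonzero is exactly what prevents the leftover affine/constant degeneracies. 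Getting the sign in the equivalence "concave balanced parabola $\iff$ $B\det\ge 0$" right, and matching it to the strict inequality in condition 2, is the delicate point.
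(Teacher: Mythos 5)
Your overall strategy is the same as the paper's: reduce everything to the scalar quadratic $g_\eta(t)=(c_1+\eta q_1)t+(c_2+\eta q_2)t^2$ on $[a_1,a_2]$, observe that the minimizer fails to be unique only if $g_\eta$ is constant or is a strictly concave parabola whose two endpoint values tie (equivalently $g_\eta'\bigl(\tfrac{a_1+a_2}{2}\bigr)=A+\eta B=0$ with $A=c_1+c_2(a_1+a_2)$, $B=q_1+q_2(a_1+a_2)$), and then check that the hypotheses kill both scenarios. Your treatment of condition 2 is correct and is exactly the paper's computation: at $\eta=-A/B$ one gets $c_2+\eta q_2=(c_2q_1-c_1q_2)/B$, whose sign is controlled by $B\cdot(c_1q_2-c_2q_1)<0$.

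The condition-1 case, however, is where you have a genuine problem. The assertion ``in the sub-case $B=0$ the endpoints always give equal values'' is false: when $B=0$ the balance equation reads $A=0$, so the endpoints tie either for every $\eta$ or for none, depending on whether $A$ vanishes. Your subsequent ``correct reading'' then only disposes of the affine degeneracy $c_2+\eta q_2=0$, but that was never the dangerous case (a nonconstant affine $g_\eta$ has a unique endpoint minimizer anyway); what you must still exclude is a strictly concave $g_\eta$ with vertex at the midpoint, i.e.\ $c_2+\eta q_2<0$ together with $A=0$. The missing one-line observation, which is precisely what the paper uses, is that $B=0$ and $c_1q_2-c_2q_1\neq0$ force $A\neq0$: if both $A$ and $B$ vanished, then $(c_1,c_2)$ and $(q_1,q_2)$ would both be orthogonal to the nonzero vector $(1,a_1+a_2)$ in $\R^2$, hence parallel, contradicting the determinant hypothesis. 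With $A\neq0$ the balance equation $A+\eta B=A=0$ has no solution, the concave-balanced scenario cannot occur for any $\eta$, and the constant scenario is already excluded by the determinant; inserting this observation closes the gap and makes your argument coincide with the paper's.
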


\begin{proof}
 Suppose there is $\eta$ such that the minimum of $(c+\eta q)\cdot m$ is attained in more than one point of $L=\phi(K)$. This means that the real function
$$
g(t)=(c+\eta q)\cdot\phi(t)=(c_1+\eta q_1)t+(c_2+\eta q_2)t^2
$$
has more than one minimum point over $K$. For that to happen, either $g$ is constant on $t$, i. e.,
$$
\begin{cases}
 c_1+\eta q_1=0\\
c_2+\eta q_2=0
\end{cases}
\Leftrightarrow \hbox{det}\begin{pmatrix} c_1&c_2\\ q_1&q_2\end{pmatrix}=0,
$$
which contradicts our hypothesis; or else we must have
\begin{equation*}
%\label{parabola_invertida} 
c_2+\eta q_2<0,\quad g'\left(\frac{a_1+a_2}{2} \right)=0.
\end{equation*}
This condition can be written as
$$
c_1+(a_1+a_2)c_2+\eta[q_1 + (a_1+a_2)q_2]=0.
$$
If $q_1+q_2(a_1+a_2)=0$, but $c_1+(a_1+a_2)c_2\neq0$ (condition 1. in statement of lemma), then this equation can never be fulfilled. Otherwise, there is a unique value for $\eta$, by solving this equation, which should also verify the condition on the sign of $c_2+\eta q_2$. It is elementary, after going through the algebra, that the condition on this sign cannot be true under the second condition on the statement of the lemma. 
\end{proof}

%\section{The case $N>1$, $n=1$, $p=2$}\label{Sec_p=2p}

\section{The case $N=n=1$, $p=3$}\label{Sec_p=3}

We study the case where $\phi(a)=(a,a^2,a^3)$, $s=3$, and $c$ and $q$ can be identified as vectors in $\R^3$. The understanding of the set $\Lambda$ and its sections by planes in $\R^3$ is much more subtle however. 
%\vspace{0.5 cm}

\begin{figure}[h]
\centering
  % Requires \usepackage{graphicx}
\includegraphics[scale=0.4,natwidth=3cm, natheight=4cm]{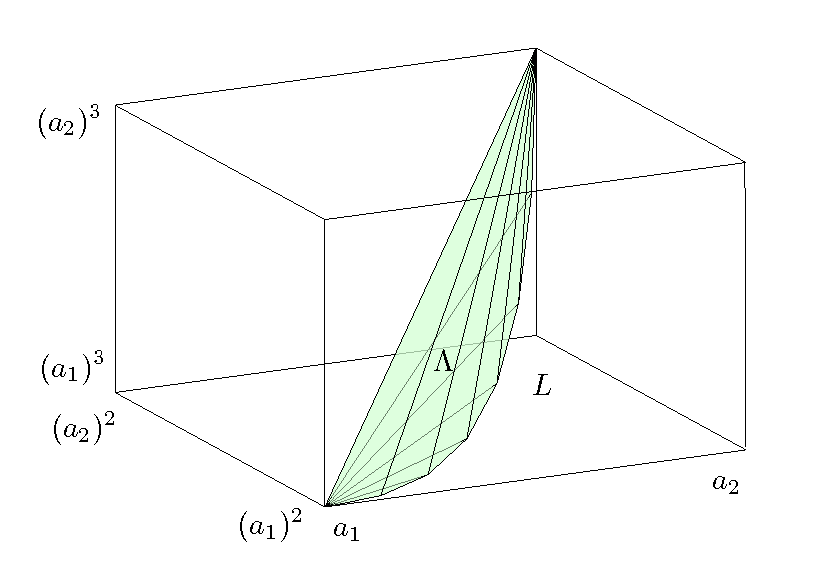}
 \caption{$\Lambda=co(L)$ for $p=3$}
\end{figure}

To repeat the procedure used for $p=2$, and apply Proposition~\ref{propdual}, we would like to give sufficient conditions for the function
\begin{equation}\label{cubic}
g(t)=(c+\eta q)\cdot \phi (t)=(c_1+\eta q_1)t+(c_2+\eta q_2)t^2+(c_3+\eta q_3)t^3
\end{equation}
to have a single minimum over $K=[a_1,a_2]$ for every $\eta$. As indicated, and after some reflection, a complete analysis of the situation is rather confusing and the conditions on the vectors $c$ and $q$ much more involved. To illustrate this, we give a sufficient condition in the following form.

\begin{lemma}
For all $x\in \R$, let $c=c(x)$ and $q=q(x)$ be vectors in $\R^3$ such that 
$$
q_2^2-3q_1q_3<0,\quad
(2c_2q_2-3c_1q_3-3q_1c_3)^2-4(c_2^2-3c_1c_3)(q_2^2-3q_1q_3)<0,
$$
then $(c, q)\in\cM$,
 and Assumption~\ref{princ} is verified.
\end{lemma}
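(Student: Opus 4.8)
The plan is to invoke Proposition~\ref{propdual}: it suffices to show that for every $\eta\in\R$ the cubic $g$ in (\ref{cubic}) attains its minimum over $K=[a_1,a_2]$ at a single point. Write $g(t) = \alpha_1 t + \alpha_2 t^2 + \alpha_3 t^3$ with $\alpha_i = c_i + \eta q_i$. A minimum of $g$ on the interval is attained either at an interior critical point or at an endpoint, and failure of uniqueness would require either two interior local minima, or an interior minimum tying the value at an endpoint, or both endpoints tying, or $g$ being affine/constant with a degenerate minimum set. The key observation is that a cubic with $\alpha_3\neq 0$ is strictly monotone or has exactly one local max and one local min, so it has at most one local minimum in the interior of any interval; thus the only way to lose uniqueness when $\alpha_3\neq 0$ is a tie between the interior local minimizer and an endpoint value, or (when $\alpha_3\to 0$, i.e. the quadratic case) the quadratic degeneracies already handled in Lemma~\ref{LemaDualidadeGrau2}. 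I would therefore first reduce to analyzing whether the equations that encode these tie/degeneracy conditions can be solved for $\eta$.

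The heart of the argument is the first hypothesis $q_2^2 - 3q_1q_3 < 0$. This is precisely the condition that $g'(t) = \alpha_1 + 2\alpha_2 t + 3\alpha_3 t^2$ — viewed appropriately — behaves well: note that the discriminant of $g'$ (as a quadratic in $t$, when $\alpha_3\neq0$) is $4\alpha_2^2 - 12\alpha_1\alpha_3 = 4(\alpha_2^2 - 3\alpha_1\alpha_3)$. So I would expand $\alpha_2^2 - 3\alpha_1\alpha_3$ as a quadratic in $\eta$:
$$
\alpha_2^2 - 3\alpha_1\alpha_3 = (c_2+\eta q_2)^2 - 3(c_1+\eta q_1)(c_3+\eta q_3) = (q_2^2 - 3q_1 q_3)\eta^2 + (2c_2 q_2 - 3c_1 q_3 - 3q_1 c_3)\eta + (c_2^2 - 3c_1 c_3).
$$
The two hypotheses say exactly that this quadratic in $\eta$ has negative leading coefficient and negative discriminant, hence $\alpha_2^2 - 3\alpha_1\alpha_3 < 0$ for every $\eta\in\R$. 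Consequently, whenever $\alpha_3\neq0$, the derivative $g'$ has no real roots, so $g$ is strictly monotone on all of $\R$, and its minimum over $K$ is the unique endpoint value $g(a_1)$ or $g(a_2)$ (and these cannot be equal, since a strictly monotone function is injective). When $\alpha_3 = 0$ we must have $q_3 = 0$ too (otherwise $\alpha_3 = 0$ forces $\eta = -c_3/q_3$, a single value, which I would treat separately), and then the leading coefficient $q_2^2 - 3q_1q_3 = q_2^2 \geq 0$, which contradicts the first hypothesis unless $q_2 = 0$; but then the discriminant condition reads $-3c_1c_3\cdot(\text{stuff})$... — so in fact the hypotheses force $q_3\neq 0$, and $\alpha_3 = 0$ happens for at most one value of $\eta$, at which $g$ is a genuine quadratic with $\alpha_2^2 < 3\alpha_1\alpha_3 = 0$, i.e. $\alpha_2\neq 0$, so $g$ is a nondegenerate parabola whose unique vertex or endpoint minimum is again unique on $K$.

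The main obstacle I anticipate is the bookkeeping of the boundary cases: the degenerate value of $\eta$ where $\alpha_3 = 0$, and verifying there is genuinely no pair $(\eta, \text{endpoint})$ or endpoint-endpoint tie hiding in the monotone case (this last is immediate from injectivity, but one should say it). One must also double-check the direction of the strict inequalities — that negative leading coefficient \emph{and} negative discriminant together give a globally negative quadratic — and confirm that the vertex location $-\alpha_2/(3\alpha_3)$ never needs to be compared with $a_1, a_2$ because monotonicity already settles things. Once these cases are dispatched, uniqueness of the minimizer of $(c+\eta q)\cdot\phi$ over $L$ holds for all $\eta$, Proposition~\ref{propdual} applies, and we conclude $(c,q)\in\cM$ and Assumption~\ref{princ}. $\qquad\square$
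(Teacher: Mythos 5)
Your proposal is correct and follows essentially the same route as the paper: both arguments observe that the discriminant of $g'$ equals $4(\alpha_2^2-3\alpha_1\alpha_3)$, which as a quadratic in $\eta$ is everywhere negative precisely when its leading coefficient and its discriminant are negative --- the two stated hypotheses --- so that $g$ is strictly monotone on $\R$ and has a unique minimizer on $K$. Your treatment is in fact more careful than the paper's sketch (e.g.\ noting that $\alpha_3=0$ would force $\alpha_2^2<0$, so that degenerate case cannot occur at all).
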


\begin{proof}
The proof consists in the realization that the conditions on the vectors $c$ and $q$ ensure that the cubic polynomial (\ref{cubic}) is monotone in all of $\R$ (avoiding degenerate situations), and thus it can only attain the minimum in a single point of any finite interval. Notice that this condition is independent of the interval. In fact, we have to discard the possibility for the derivative of the polynomial $g(t)$ to have roots. This amounts to the negativity of the corresponding discriminant. And this, in turn, is a quadratic expression in $\eta$ that ought to be always negative. This occurs when that parabola has a negative discriminant, and the leading coefficient is also negative. These two conditions are exactly the ones in the statement of this lemma.
\end{proof}

A more general condition would focus on considering the local maximizer and the local minimizer of $g(t)$, $M_+$ and $M_-$, respectively, and demanding that the interval $[a_1, a_2]$ have an empty intersection with the interval determined by $M_+$ and $M_-$. But this would lead to rather complicated expressions. Even so, some times under more specific hypotheses on the form of the vectors $c$ and $q$, these conditions can be exploited. 

\begin{remark}
Notice that the relation 
$$
ext (\Lambda) = L
$$
is not true for a general $K$ if it has positive and negative values. However, it is true if we consider $a_1>0$ or $a_2<0$. 
\end{remark}

\begin{lemma}\label{LemaDualidadeGrau3}
 Let $K=[a_1,a_2]$ with $a_1>0$ and 
$$
(c,q)=((0,c_2,c_3),(0,q_2,q_3))
$$ 
such that 

\begin{equation*}
-\frac{q_2}{q_3}<0,\quad
(c_2,c_3)\cdot(1,-\frac{q_2}{q_3})<0.
\end{equation*}

 Then the assumptions of Proposition~\ref{propdual} are valid, and consequently so is Assumption~\ref{princ}.
\end{lemma}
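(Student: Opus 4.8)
The plan is to verify the hypothesis of Proposition~\ref{propdual}: for every $\eta\in\R$ the linear functional $m\mapsto(c+\eta q)\cdot m$ must attain its minimum over $L=\phi(K)$ at a single point. Since here $\phi(t)=(t,t^2,t^3)$ is injective and $c_1=q_1=0$, this is equivalent to showing that, for every $\eta$, the cubic
$$
g_\eta(t)=(c+\eta q)\cdot\phi(t)=B(\eta)\,t^2+A(\eta)\,t^3,\qquad B(\eta)=c_2+\eta q_2,\quad A(\eta)=c_3+\eta q_3,
$$
has a unique minimizer on $K=[a_1,a_2]$ (the linear term drops out). Geometrically this is the expected picture: projecting $\Lambda=\mathrm{co}(L)$ onto the last two coordinates gives the convex hull of the strictly convex arc $t\mapsto(t^2,t^3)$, whose only straight edge is the chord joining the two endpoints, so the sole obstruction is that some functional of the one‑parameter family $(0,B(\eta),A(\eta))$ be an inward normal to that chord, i.e. that $g_\eta$ be constant or have two (endpoint) minimizers.

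The first step I would carry out is to extract the single elementary consequence of the two hypotheses that does all the work: \emph{if $A(\eta)\le 0$ then $B(\eta)<0$.} Indeed, multiplying $c_3+\eta q_3\le 0$ by the positive number $q_2/q_3$ — this is exactly where $-q_2/q_3<0$ is used, and the sign is preserved — gives $c_3q_2/q_3+\eta q_2\le 0$, whence
$$
B(\eta)=\Bigl(c_2-c_3\tfrac{q_2}{q_3}\Bigr)+\Bigl(c_3\tfrac{q_2}{q_3}+\eta q_2\Bigr)<0,
$$
the first bracket being precisely $(c_2,c_3)\cdot(1,-q_2/q_3)<0$. In particular $(B(\eta),A(\eta))\neq(0,0)$ for all $\eta$ (so $g_\eta$ is never constant), and one never has $B(\eta)\ge 0$ together with $A(\eta)\le 0$.

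The remaining step is a short case analysis on $[a_1,a_2]\subset(0,\infty)$, based on $g_\eta'(t)=t\,(2B+3At)$ and $t>0$. If $A>0$, then $t\mapsto 2B+3At$ is strictly increasing, so $g_\eta'$ changes sign at most once on the interval and only from $-$ to $+$; hence $g_\eta$ is strictly decreasing then strictly increasing (with possibly one phase empty), and its minimizer — an endpoint, or the unique zero $-2B/(3A)$ of $2B+3At$ when that lies in $(a_1,a_2)$ — is unique. If $A\le 0$, then $B<0$ by the observation above, so $2B+3At<0$ for every $t>0$, hence $g_\eta'<0$ on the interval and $g_\eta$ is strictly decreasing, with unique minimizer $a_2$ (the subcase $A=0$ is just $g_\eta(t)=Bt^2$ with $B<0$). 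In every case the minimizer is unique, so Proposition~\ref{propdual} applies and Assumption~\ref{princ} holds. I expect the only delicate point to be making sure the case list is genuinely exhaustive and that the excluded configuration — two tied endpoint minimizers, which forces a cubic with an interior local maximum, i.e. $A<0$ together with $B>0$ — is exactly the one killed by the implication $A\le 0\Rightarrow B<0$; beyond that bookkeeping of signs the monotonicity argument is routine.
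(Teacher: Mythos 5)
Your proof is correct and follows essentially the same route as the paper: both reduce the hypothesis of Proposition~\ref{propdual} to the claim that the cubic $g_\eta(t)=(c_2+\eta q_2)t^2+(c_3+\eta q_3)t^3$ has a unique minimizer on $K\subset(0,\infty)$ for every $\eta$, and both use the two sign hypotheses to rule out the only bad configuration, namely an interior local maximum ($A<0$ together with $B>0$). The paper organizes the bookkeeping by computing the local maximizer $M_+$ explicitly and showing $M_+\le 0<a_1$ (writing out only the subcase $q_2>0$, with $\eta=-c_3/q_3$ treated separately), whereas your single implication $A(\eta)\le 0\Rightarrow B(\eta)<0$ combined with the monotonicity analysis of $g_\eta'$ handles both sign cases of $(q_2,q_3)$ uniformly --- a slightly cleaner and more complete write-up of the same idea.
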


\begin{proof}

In this situation, the maximizer $M_+$ referred to above is given by

$$
M_+=\dfrac{-(c_2+\eta q_2)-\vert c_2+\eta q_2\vert}{3(c_3+\eta q_3)}
$$
If $q_2>0$, then $q_3>0$, and we have 
$$
-\frac{c_2}{q_2}> -\frac{c_3}{q_3}.
$$
Hence if $\eta\in ]-\infty,-\frac{c_2}{q_2}]\setminus\{-\frac{c_3}{q_3}\}$
$$
M_+=\dfrac{-(c_2+\eta q_2)+ c_2+\eta q_2}{3(c_3+\eta q_3)}=0.
$$

If $\eta>-\frac{c_2}{q_2}$,
$$
M_+(\eta)=\dfrac{-(c_2+\eta q_2)- (c_2+\eta q_2)}{3(c_2+\eta q_2)}=\dfrac{-2(c_2+\eta q_2)}{3(c_3+\eta q_3)}<0.
$$
In any case 
$M_+(\eta)\leq 0$, thus $a_1>M_+$.

Also if $\eta=-\frac{c_3}{q_3}$, 
$$
g(t)=(c_2+\eta q_2)t^2=(c_2,c_3)\cdot(1,-\frac{q_2}{q_3})t^2
$$ 
which has a unique minimum in $K$ since we have assumed $a_1>0$.
We conclude that the condition (\ref{Condition_inProp}) in Proposition~\ref{propdual} is verified.
\end{proof}

In a very similar way we can prove the following.

\begin{lemma}
 Let $K=[a_1,a_2]$ with $a_2<0$ and 
$$
(c,q)=((0,c_2,c_3),(0,q_2,q_3))
$$ 
such that 
\begin{equation*}
-\frac{q_2}{q_3}>0,\quad
(c_2,c_3)\cdot(1,-\frac{q_2}{q_3})<0.
\end{equation*}
 Then $(c,q)\in\cM$ and consequently Assumption~\ref{princ} is valid.
\end{lemma}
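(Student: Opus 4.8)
The plan is to mirror the argument of Lemma~\ref{LemaDualidadeGrau3} almost verbatim, adapting the formulas and inequalities to the case where the admissible interval lies entirely on the negative axis. As before, by Proposition~\ref{propdual} it suffices to show that for every $\eta\in\R$ the cubic
$$
g(t)=(c+\eta q)\cdot\phi(t)=(c_2+\eta q_2)t^2+(c_3+\eta q_3)t^3
$$
(the linear term vanishes since $c_1=q_1=0$) attains its minimum over $K=[a_1,a_2]$ at a single point. The strategy is to locate the local maximizer $M_+$ of $g$ and check that it never falls inside $K$; since $a_2<0$ this now means showing $M_+\ge 0\ge a_2$, rather than $M_+\le 0\le a_1$ as in Lemma~\ref{LemaDualidadeGrau3}. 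When $M_+$ lies outside $[a_1,a_2]$, the cubic $g$ is monotone on $K$ (or, in the degenerate case $c_3+\eta q_3=0$, is a pure quadratic), and in either situation the minimum over a closed interval not containing the single interior critical configuration is attained at exactly one point.

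First I would write out $g'(t)=2(c_2+\eta q_2)t+3(c_3+\eta q_3)t^2$, whose roots are $t=0$ and $t=-\tfrac{2(c_2+\eta q_2)}{3(c_3+\eta q_3)}$, and identify which of the two is the local maximizer using the sign of the leading coefficient $c_3+\eta q_3$; this yields the closed-form expression for $M_+$ with an absolute value exactly as displayed before Lemma~\ref{LemaDualidadeGrau3}'s proof, namely $M_+=\frac{-(c_2+\eta q_2)-|c_2+\eta q_2|}{3(c_3+\eta q_3)}$ replaced by its sign-mirrored analogue $M_+=\frac{-(c_2+\eta q_2)+|c_2+\eta q_2|}{3(c_3+\eta q_3)}$ appropriate to the $a_2<0$ geometry. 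Next I would split into the two regimes according to the sign of $q_2$ (equivalently $q_3$, since the hypothesis $-q_2/q_3>0$ forces $q_2$ and $q_3$ to have opposite signs, and one checks the relevant ordering of $-c_2/q_2$ and $-c_3/q_3$ from the second hypothesis $(c_2,c_3)\cdot(1,-q_2/q_3)<0$). On the $\eta$-interval where $M_+$ is forced to equal $0$, we are done because $0>a_2$; on the complementary interval one computes $M_+=\frac{-2(c_2+\eta q_2)}{3(c_3+\eta q_3)}$ and uses the sign hypotheses to conclude $M_+>0>a_2$.

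Finally I would dispose of the degenerate value $\eta=-c_3/q_3$, where $g(t)=(c_2+\eta q_2)t^2=\big((c_2,c_3)\cdot(1,-q_2/q_3)\big)t^2$; the hypothesis $(c_2,c_3)\cdot(1,-q_2/q_3)<0$ makes this a concave-down parabola, whose minimum over $[a_1,a_2]$ is attained at an endpoint, and since $a_2<0$ the interval does not straddle the vertex $t=0$, so the minimizing endpoint is unique. Assembling the cases shows that condition (\ref{Condition_inProp}) of Proposition~\ref{propdual} holds for every $\eta$, whence $(c,q)\in\cM$ and Assumption~\ref{princ} follows.

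The routine-but-delicate part is the bookkeeping with signs: correctly resolving the absolute value in $M_+$, getting the inequality between $-c_2/q_2$ and $-c_3/q_3$ the right way around, and making sure the boundary cases $\eta=-c_2/q_2$ and $\eta=-c_3/q_3$ are consistently handled. I expect no real obstacle beyond this arithmetic, since the geometry is a literal reflection $t\mapsto -t$ of the situation already treated in Lemma~\ref{LemaDualidadeGrau3}; indeed one could alternatively phrase the whole argument as: substitute $\tilde t=-t$, observe that this sends $K$ to an interval in $(0,\infty)$ and the triple $(0,c_2,c_3)$, $(0,q_2,q_3)$ to $(0,c_2,-c_3)$, $(0,q_2,-q_3)$, under which the hypotheses become exactly those of Lemma~\ref{LemaDualidadeGrau3}, and then invoke that lemma directly.
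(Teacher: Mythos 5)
Your overall strategy is the right one and matches what the paper intends (the paper omits this proof entirely, remarking only that it can be done ``in a very similar way'' to Lemma~\ref{LemaDualidadeGrau3}); in particular your closing reflection argument $t\mapsto-\tilde t$ is correct and is the cleanest route: it sends $K$ to $[-a_2,-a_1]\subset(0,\infty)$ and the pair $(0,c_2,c_3)$, $(0,q_2,q_3)$ to $(0,c_2,-c_3)$, $(0,q_2,-q_3)$, and one checks that $-\tilde q_2/\tilde q_3=q_2/q_3<0$ and $(\tilde c_2,\tilde c_3)\cdot(1,-\tilde q_2/\tilde q_3)=(c_2,c_3)\cdot(1,-q_2/q_3)<0$, so Lemma~\ref{LemaDualidadeGrau3} applies verbatim to the transformed data. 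However, in the direct ``sign-mirrored'' version you sketch there is one concrete slip: the local maximizer of the cubic $g$ is an intrinsic feature of $g$ and does not depend on where $K$ sits, so it is still given by
$$
M_+=\frac{-(c_2+\eta q_2)-\vert c_2+\eta q_2\vert}{3(c_3+\eta q_3)};
$$
your proposed replacement $\frac{-(c_2+\eta q_2)+\vert c_2+\eta q_2\vert}{3(c_3+\eta q_3)}$ is the local \emph{minimizer} $M_-$ (evaluate $g''(t)=6(c_3+\eta q_3)t+2(c_2+\eta q_2)$ at the two roots of $g'$), and showing that $M_-$ avoids $K$ is neither what is needed nor true in general under the stated hypotheses. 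What changes in the mirrored case is only the target inequality --- you must show $M_+\ge 0>a_2$ rather than $M_+\le 0<a_1$ --- together with the reversed ordering of $-c_2/q_2$ and $-c_3/q_3$ that the hypothesis $(c_2,c_3)\cdot(1,-q_2/q_3)<0$ now yields. With that correction the case analysis (including the degenerate value $\eta=-c_3/q_3$, where $g(t)=\bigl((c_2,c_3)\cdot(1,-q_2/q_3)\bigr)t^2$ is strictly monotone on $K\subset(-\infty,0)$, which you handle correctly) goes through and yields the lemma.
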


\section{A geometric approach to the case $N=n=1$, $p=3$.}\label{Sec_Yet_p=3}

As we have seen, the use of Proposition~\ref{propdual} is simpler only when restricted to some particular classes of examples. Thus we propose a general criteria for obtaining Assumption~\ref{princ}, based on a geometric approach.

We first give a result that generalizes the strictly convexity of a $\phi$-parametrized plane curve for a 3-dimensional one.

\begin{lemma}\label{Lemma ConvexidadeEstrita p=3}
Let $K=[a_1,a_2]$ with $a_1>0$, $\phi(t)=(t,t^2,t^3)$, and $L$ the curve parametrized by $\phi$ for $t$ in $K$. 
\begin{enumerate}
\item 
Given $t$ in $K$, then for all $s\in K$ such that $s\neq t$ we have 
$$(\phi(s)-\phi(t))\cdot N(t)>0$$ where $N(t)$ is the normal vector to $\phi$ at $t$.
\item 
For every $t \in K, \ v\in \Lambda=co(L)\setminus\{\phi(t)\}$, we have
$$(v-\phi(t))\cdot N(t)>0.$$
\end{enumerate}
\end{lemma}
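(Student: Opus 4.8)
The plan is to prove part (1) directly by a computation with the specific parametrization $\phi(t)=(t,t^2,t^3)$, and then deduce part (2) from part (1) by taking convex combinations. First I would fix $t\in K$ and compute an explicit normal vector. The tangent to the curve is $\phi'(t)=(1,2t,3t^2)$; since we work in $\R^3$ but the curve is a $1$-manifold, ``the'' normal $N(t)$ should be interpreted as a normal that makes the curve lie on one side, and the natural candidate is the normal in the osculating plane pointing towards the center of curvature, i.e. a suitable combination of $\phi'(t)$ and $\phi''(t)=(0,2,6t)$. I would therefore write $N(t)=\phi''(t)-\frac{\phi'(t)\cdot\phi''(t)}{|\phi'(t)|^2}\phi'(t)$, the component of $\phi''(t)$ orthogonal to $\phi'(t)$, and then expand the quantity $(\phi(s)-\phi(t))\cdot N(t)$ as a function of $s$. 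The key algebraic step is that, because $N(t)$ is orthogonal to $\phi'(t)$, the function $h(s)=(\phi(s)-\phi(t))\cdot N(t)$ satisfies $h(t)=h'(t)=0$, so $h(s)$ is divisible by $(s-t)^2$; one then checks that the remaining factor is strictly positive on $K$ using $a_1>0$ (this is where positivity of the interval enters, guaranteeing the relevant polynomial in $s$ and $t$ has no sign change). Concretely I expect $h(s)$ to factor as $(s-t)^2$ times a polynomial with positive coefficients in $s,t$, hence strictly positive whenever $s\neq t$ and $s,t>0$.

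Once part (1) is established, part (2) is immediate: any $v\in\Lambda=\mathrm{co}(L)$ can be written, by Proposition 3.1 together with $\Lambda=\mathrm{co}(L)$ (in fact by Carath\'eodory's theorem), as a finite convex combination $v=\sum_j \theta_j\,\phi(s_j)$ with $s_j\in K$, $\theta_j\ge 0$, $\sum_j\theta_j=1$. Then
$$
(v-\phi(t))\cdot N(t)=\sum_j \theta_j\,(\phi(s_j)-\phi(t))\cdot N(t)\ge 0,
$$
with each summand nonnegative by part (1). Strictness follows because equality would force $\theta_j=0$ for every $j$ with $s_j\neq t$, i.e. $v=\phi(t)$, contradicting $v\neq\phi(t)$. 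I would spell out this last dichotomy carefully since it is the only place where the strict inequality (as opposed to $\ge 0$) is used.

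The main obstacle I anticipate is purely at the level of part (1): pinning down the right normal direction $N(t)$ so that the sign in the statement is correct, and then verifying that the leftover polynomial factor of $h(s)/(s-t)^2$ is genuinely positive on all of $K$ rather than merely nonnegative. The hypothesis $a_1>0$ is clearly doing the work here, and I would want to identify exactly which monomials in the factored expression could change sign if negative values of $t$ were allowed, to make transparent why the restriction to $a_1>0$ is needed (matching the earlier remark that $\mathrm{ext}(\Lambda)=L$ fails for intervals straddling $0$). Everything else — the reduction in part (2) and the strictness argument — is routine convex-analysis bookkeeping.
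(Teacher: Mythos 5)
Your proposal follows essentially the same route as the paper: the paper's $N(t)=c_t(-9t^3-2t,\,1-9t^4,\,6t^3+3t)$ is exactly a positive multiple of your Gram--Schmidt normal $\phi''-\frac{\phi'\cdot\phi''}{|\phi'|^2}\phi'$, and the paper likewise factors $(\phi(s)-\phi(t))\cdot N(t)$ by the double root at $s=t$ and uses $a_1>0$ to exclude the remaining root $\hat s=-\frac{3t^4+6t^2+1}{6t^3+3t}<0$, before passing to convex combinations for part (2). Your anticipated computation does check out --- the cofactor is $(6t^3+3t)s+3t^4+6t^2+1$, with all monomials positive for $s,t>0$ --- and your strictness bookkeeping at the end is, if anything, spelled out more carefully than in the paper.
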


\begin{proof}

\text{ To check the first part of the statement notice that }
since 
$$\phi'(t)=(1,2t,3t^2)$$ 
and 
$$\phi''(t)=(0,2,6t)$$ we have that the normal vector, colinear to $\phi'(t)\times \phi''(t)$, is given by

$$N(t)=c_t(-9t^2-2t,1-9t^4,6t^3+3t)$$ where $c_t>0$ is a normalizing constant.
Setting 
$$N_1=-9t^3-2t,\quad N_2=1-9t^4,\quad N_3=6t^3+3t,
$$
we find that the solution $s$ of 
$$(\phi(s)-\phi(t))\cdot N(t)=0$$ also verifies
$$N_3s^3+N_2s^2+N_1s-N\cdot\phi(t)=0,$$ which is equivalent to
$$(s-t)^2(N_3s+N_2+2tN_3)=0.$$ This means that 
$$\hat{s}=-\frac{N_2}{N_3}-2t=-\frac{3t^4+6t^2+1}{6t^3+3t}$$ is the only solution different from $t$, but also that it is negative for all $t>0$, and consequently that it should be excluded.
Once we assumed $K\subset \R^{+}$ and $s\neq t$ the conclusion is immediate. 

By using the previous discussion, proving the second part of the statement is trivial once we notice that both $m$ and $\phi(t)$ can be rewritten as
$$\sum_{i=1}^4\alpha_i\phi(s_i) \text{ and } \sum_{i=1}^4\alpha_i\phi(t)$$ respectively, where $s_i\in K$ and $\sum_{i=1}^4\alpha_i=1$.

\end{proof}

Another useful lemma.

\begin{lemma}\label{Lemma ConsequenciadeCondUnicidade}
 If $q$ and $c$ are such that 
\begin{equation}\label{Cond_p3}
(\phi'(t)\times (c\times q)) \cdot(\phi(s)-\phi(t))
\end{equation}
does not change sign for $t,s\in K,\ s\neq t $, then if $v\in \Lambda$, $v\neq \phi(t)$, and $q\cdot(v-\phi(t))=0$ we have 
$$c\cdot(v-\phi(t))\neq 0.$$
\end{lemma}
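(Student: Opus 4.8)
The plan is to work with the vector $w = c \times q \in \R^3$ and to use the cross-product identity $\phi'(t) \times (c \times q) = (\phi'(t)\cdot q)\, c - (\phi'(t)\cdot c)\, q$ to recognize expression~(\ref{Cond_p3}) as a linear combination of the two scalar quantities $c\cdot(\phi(s)-\phi(t))$ and $q\cdot(\phi(s)-\phi(t))$. Concretely, setting $\alpha(t) = \phi'(t)\cdot q$ and $\beta(t)=\phi'(t)\cdot c$, the quantity in~(\ref{Cond_p3}) equals $\alpha(t)\,[c\cdot(\phi(s)-\phi(t))] - \beta(t)\,[q\cdot(\phi(s)-\phi(t))]$. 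So the hypothesis says exactly that this particular combination of the ``$c$-height'' and the ``$q$-height'' of the chord $\phi(s)-\phi(t)$ keeps a constant sign as $s,t$ range over $K$ with $s\neq t$.

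First I would reduce from a general $v\in\Lambda$ to the curve $L$ itself. Since $\Lambda = co(L)$ (Proposition~\ref{L_extLambda} and its proof), write $v = \sum_{i=1}^{4}\alpha_i \phi(s_i)$ with $s_i\in K$, $\alpha_i\ge 0$, $\sum_i \alpha_i = 1$ (Carath\'eodory in $\R^3$), exactly as in the proof of Lemma~\ref{Lemma ConvexidadeEstrita p=3}. Then $q\cdot(v-\phi(t)) = \sum_i \alpha_i\, q\cdot(\phi(s_i)-\phi(t))$ and likewise for $c$. The assumption $v\neq\phi(t)$ forces at least one $s_i\neq t$ with $\alpha_i>0$; and because~(\ref{Cond_p3}) does not change sign, the terms $\alpha(t)\,[c\cdot(\phi(s_i)-\phi(t))]-\beta(t)\,[q\cdot(\phi(s_i)-\phi(t))]$ all have the same (weak) sign, so they cannot cancel unless every nonzero term vanishes individually, i.e.\ $\alpha(t)\,c\cdot(\phi(s_i)-\phi(t)) = \beta(t)\,q\cdot(\phi(s_i)-\phi(t))$ for each $i$ with $\alpha_i>0$. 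Summing against the weights $\alpha_i$ transfers the relevant conclusion from $L$ to $v$.

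Now suppose, for contradiction, that $q\cdot(v-\phi(t))=0$ and also $c\cdot(v-\phi(t))=0$. On the curve level this means, for each active index $i$, both chords $c\cdot(\phi(s_i)-\phi(t))$ and $q\cdot(\phi(s_i)-\phi(t))$ vanish simultaneously (using the sign-definiteness argument above together with the two vanishing sums), i.e.\ $\phi(s_i)-\phi(t)$ is orthogonal to the plane spanned by $c$ and $q$, hence parallel to $w=c\times q$. But $\phi(s)-\phi(t)$, for $s\ne t$ in $K$, is a nonzero vector whose direction varies with $s$: explicitly $\frac{\phi(s)-\phi(t)}{s-t} = (1,\, s+t,\, s^2+st+t^2)$, and one checks this cannot be a fixed direction for more than finitely many $s$ — and, crucially, if it were parallel to $w$ for the one relevant $s_i$, then feeding $s=s_i$ back into~(\ref{Cond_p3}) makes that expression vanish, which is consistent with constant sign only in the borderline case; the genuine contradiction comes from the fact that~(\ref{Cond_p3}) evaluated as a function of $s$ is a cubic in $s$ with a double root at $s=t$ (as in Lemma~\ref{Lemma ConvexidadeEstrita p=3}) whose third root is generically $\ne s_i$, so demanding the double vanishing pins down $s_i$ in a way incompatible with $v\neq\phi(t)$ unless $c\times q=0$, which is excluded by the nondegeneracy implicit in~(\ref{Cond_p3}) not being identically zero.

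The main obstacle I anticipate is the last step: passing cleanly from ``the constant-sign combination vanishes'' to ``both chords vanish,'' and then ruling out the degenerate coincidence $\phi(s_i)-\phi(t)\parallel c\times q$. The sign argument only gives that each active term is individually zero when the two weighted sums both vanish; turning the resulting system $\alpha(t)\,c\cdot(\phi(s_i)-\phi(t)) = \beta(t)\,q\cdot(\phi(s_i)-\phi(t)) = 0$ into a contradiction requires knowing $(\alpha(t),\beta(t))\neq(0,0)$ and exploiting the explicit cubic/double-root structure of~(\ref{Cond_p3}) in $s$, exactly the computation carried out in Lemma~\ref{Lemma ConvexidadeEstrita p=3}. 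I would handle it by the same $(s-t)^2(\cdots)$ factorization used there, which identifies the unique ``bad'' $s$ and shows it lies outside $K$ or coincides with $t$, forcing $v=\phi(t)$.
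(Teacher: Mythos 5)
Your first two paragraphs already contain the entire proof, and they coincide with the paper's own argument: (i) the identity $\phi'(t)\times(c\times q)=(\phi'(t)\cdot q)\,c-(\phi'(t)\cdot c)\,q$, and (ii) the Carath\'eodory decomposition $v=\sum_i\alpha_i\phi(s_i)$, which shows that
$$(\phi'(t)\times(c\times q))\cdot(v-\phi(t))=\sum_i\alpha_i\,(\phi'(t)\times(c\times q))\cdot(\phi(s_i)-\phi(t))$$
is a sum of terms of one common strict sign, at least one of which is nonzero because $v\neq\phi(t)$ forces some $\alpha_i>0$ with $s_i\neq t$; hence the left-hand side is nonzero. (Here ``does not change sign'' must be read strictly --- the expression never vanishes for $s\neq t$ --- which is how the paper uses it and how it is verified in Example 3.) If now both $q\cdot(v-\phi(t))=0$ and $c\cdot(v-\phi(t))=0$, applying the identity \emph{directly to the vector} $v-\phi(t)$ gives $(\phi'(t)\times(c\times q))\cdot(v-\phi(t))=(\phi'(t)\cdot q)\,c\cdot(v-\phi(t))-(\phi'(t)\cdot c)\,q\cdot(v-\phi(t))=0$, a contradiction. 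That is the whole proof; nothing else is needed.

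Your third paragraph is where the argument goes astray, and the ``obstacle'' you anticipate is illusory. From the two vanishing sums you correctly deduce that each active term of the constant-sign combination vanishes, i.e.\ $(\phi'(t)\times(c\times q))\cdot(\phi(s_i)-\phi(t))=0$ for every active $i$ --- but that is \emph{already} the contradiction (it violates the strict-sign hypothesis whenever $s_i\neq t$), so you should stop there. Instead you assert that ``both chords vanish simultaneously'' and that $\phi(s_i)-\phi(t)$ must be orthogonal to the span of $c$ and $q$: that is a non sequitur, since the vanishing of the single combination $(\phi'(t)\cdot q)\,c\cdot(\phi(s_i)-\phi(t))-(\phi'(t)\cdot c)\,q\cdot(\phi(s_i)-\phi(t))$ does not force the two dot products to vanish separately. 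The ensuing discussion of chord directions, the cubic in $s$ with a double root at $t$, and ``generic'' third roots is unnecessary and, as written (``generically'', ``borderline case'', ``nondegeneracy implicit in\ldots''), does not close the argument. Delete the third paragraph and apply the identity to $v-\phi(t)$ itself rather than chord by chord, and you have exactly the paper's proof.
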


This means that the linear function $c$ cannot take the same value over $\phi(t)$ and any $v\neq\phi(t)$ in the plane section
$$\{v\in \Lambda:\ q\cdot v=q\cdot \phi(t)\}.$$

\begin{proof}
Notice that for $v\in \Lambda$,
\begin{eqnarray*}
(\phi'(t)\times(c\times q)) \cdot(v-\phi(t)) &=&
(\phi'(t)\times(c\times q)) \cdot(\sum_{i=1}^4\alpha_i\phi(s_i)-\sum_{i=1}^4\alpha_i\phi(t))\\
&=& \sum_{i=1}^4\alpha_i(\phi'(t)\times(c\times q))\cdot(\phi(s_i)-\phi(t))>0 (\text{ or }<0),
\end{eqnarray*}
so that the condition stated is also verified for any $v\in \Lambda$.

Suppose now that $v\in \Lambda$ verifies   
$q\cdot(v-\phi(t))=0$ for given $t\in K$ with $v\neq \phi(t)$ and is such that $c\cdot(v-\phi(t))=0$, then 
\begin{eqnarray*}
(\phi'(t)\times(c\times q)) \cdot(v-\phi(t)) &=& [(\phi'(t)\cdot q)c-(\phi'(t)\cdot c)q]\cdot(v-\phi(t))\\
&=& (\phi'(t)\cdot q)c\cdot (v-\phi(t))-(\phi'(t)\cdot c)q\cdot(v-\phi(t))=0,
\end{eqnarray*}
a contradiction concerning the argument above.
\end{proof}

We now define the set 
$\cM_1$ of pairs $(c, q)\in\R^3\times\R^3$ through the following requirements: 

\begin{itemize}
 \item the quantity in (\ref{Cond_p3})
 does not change sign over the pairs $t,s\in K,\ s\neq t$;
\item whenever there is a unique $a\in K=[a_1,a_2]$ such that
\begin{equation}\label{Cond_p3PM}
 (\phi(a_1)+\phi(a_2)-2\phi(a))\cdot q=0,
\end{equation}
then
$$
(\phi(a_1)+\phi(a_2)-2\phi(a))\cdot c>0.
$$
\end{itemize}

Once more we can establish the following result.

\begin{proposition}\label{prop princ p=3}
Let $\cM$ be as in (\ref{Def_M}). 

If $a_1>0$, and $(c, q)\in\cM_1$, then $(c, q)\in \cM$ and Assumption~\ref{princ} holds.
\end{proposition}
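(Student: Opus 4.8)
The plan is to verify Assumption~\ref{princ} by applying Proposition~\ref{propdual}, i.e. to show that for each $\eta\in\R$ (equivalently, for each vector $\eta q$ in the duality pairing) the linear functional $(c+\eta q)\cdot m$ attains its minimum over $L$ at a single point, and then to handle the remaining degenerate directions separately. Since $a_1>0$, Lemma~\ref{Lemma ConvexidadeEstrita p=3} tells us that $L$ (and hence $\Lambda=co(L)$) is strictly convex with respect to the normal field $N(t)$, so the geometric picture is that of a strictly convex curve in $\R^3$ whose convex hull we are slicing by the plane $\{v:\ q\cdot v=\xi\}$. First I would fix $x$, write $(c,q)=(c(x),q(x))\in\cM_1$, pick $\xi\in q\Lambda$, and let $m_0$ be a point in $L$ at which the constrained minimum $\min\{c\cdot m:\ q\cdot m=\xi,\ m\in\Lambda\}$ is attained; such a point exists because $ext(\Lambda)\subseteq L$ and the feasible set is a compact convex slice. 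Write $m_0=\phi(a)$ for some $a\in K$.

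The core step is to argue that $\phi(a)$ is the \emph{unique} minimizer. Suppose $v\in\Lambda$, $v\neq\phi(a)$, also satisfies $q\cdot v=\xi=q\cdot\phi(a)$ and $c\cdot v=c\cdot\phi(a)$, i.e. $q\cdot(v-\phi(a))=0$ and $c\cdot(v-\phi(a))=0$. I would split into two cases according to whether $a$ is interior to $K$ or an endpoint. If $a\in(a_1,a_2)$, first-order optimality for the constrained problem over the curve forces $\phi'(a)\cdot(c+\eta q)=0$ for the KKT multiplier $\eta$, so $c+\eta q$ is orthogonal to $\phi'(a)$; combined with $(c+\eta q)\cdot(v-\phi(a))=0$ this places $v-\phi(a)$ in the plane spanned by $\phi'(a)$ and the normal $N(a)$... more cleanly, I would invoke Lemma~\ref{Lemma ConsequenciadeCondUnicidade}: the first bullet defining $\cM_1$ says exactly that $(\phi'(t)\times(c\times q))\cdot(\phi(s)-\phi(t))$ does not change sign, and the lemma then concludes $c\cdot(v-\phi(a))\neq0$ for any $v\in\Lambda\setminus\{\phi(a)\}$ with $q\cdot(v-\phi(a))=0$, a direct contradiction. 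This disposes of every minimizer $\phi(a)$ with $a$ interior, and indeed of every $a$ for which $\phi'(a)\cdot q\neq0$. The remaining possibility, not covered by Lemma~\ref{Lemma ConsequenciadeCondUnicidade}, is the genuinely degenerate one where the minimum of the cubic $g(t)=(c+\eta q)\cdot\phi(t)$ over $K$ is attained simultaneously at both endpoints $a_1$ and $a_2$ (the symmetric situation). That forces $g(a_1)=g(a_2)$ and $g$ to have no interior value below this level; writing $g(a_1)=g(a_2)$ out gives $(\phi(a_1)-\phi(a_2))\cdot(c+\eta q)=0$, and the vanishing of the interior critical-point condition gives, for the critical point $a$, $(\phi(a_1)+\phi(a_2)-2\phi(a))\cdot(c+\eta q)=0$ together with $(\phi(a_1)+\phi(a_2)-2\phi(a))\cdot q=0$. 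The second bullet in the definition of $\cM_1$ then says $(\phi(a_1)+\phi(a_2)-2\phi(a))\cdot c>0$, which together with the two displayed equalities is contradictory. Hence the minimizer over $L$ is always unique, Proposition~\ref{propdual} applies, and Assumption~\ref{princ} holds; since $\cM$ is precisely the set of pairs for which Assumption~\ref{princ}'s argmin condition holds, $(c,q)\in\cM$.

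The step I expect to be the main obstacle is the bookkeeping in the degenerate endpoint case: translating "the cubic $g$ attains its minimum over $[a_1,a_2]$ at more than one point" into the clean algebraic statements (\ref{Cond_p3}) and (\ref{Cond_p3PM}), and checking that these two bullets really exhaust all the ways a cubic restricted to an interval can fail to have a unique minimizer. One must be careful that $c+\eta q$ could be such that $g$ is linear (coefficient of $t^3$ and $t^2$ both vanish) or quadratic; the linear case gives a whole edge of minimizers unless $c+\eta q$ is not orthogonal to $\phi(a_1)-\phi(a_2)$, and the quadratic case is controlled again by the $a_1>0$ hypothesis and strict convexity as in Lemma~\ref{LemaDualidadeGrau3}. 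I would therefore organize the proof by the degree of the nonzero part of $c+\eta q$, using Lemma~\ref{Lemma ConsequenciadeCondUnicidade} for the truly cubic directions and the second bullet of $\cM_1$ plus $a_1>0$ for the lower-degree and endpoint-tie directions, and then assemble these into the single conclusion via Proposition~\ref{propdual}.
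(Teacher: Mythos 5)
Your reduction of uniqueness to Lemma~\ref{Lemma ConsequenciadeCondUnicidade} is fine as far as it goes: if some $\phi(a)\in L$ minimizes $c\cdot m$ over the section $\Lambda\cap\{q\cdot m=\xi\}$, then by that lemma no other point of the section can share its $c$-value, so the minimizer is unique. The genuine gap is one step earlier, where you assert that such a minimizer $m_0\in L$ exists ``because $ext(\Lambda)\subseteq L$ and the feasible set is a compact convex slice.'' This is exactly the fallacy the paper warns against right after Assumption~\ref{princ}: the extreme points of a hyperplane section of $\Lambda$ are in general \emph{not} extreme points of $\Lambda$, so the minimum over the section may a priori be attained only off $L$. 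Indeed, Lemma~\ref{Lemma ConsequenciadeCondUnicidade} only tells you that $c\cdot(v-\phi(t))$ never vanishes on the section through $\phi(t)$; by connectedness it therefore has constant sign there, but that sign could be negative, in which case $\phi(t)$ is the unique \emph{maximizer} of $c$ over its section and the minimum lives on a face of $\Lambda$ outside $L$. Ruling this out is the entire role of the second bullet defining $\cM_1$ and is where the paper spends almost all of its effort: condition (\ref{Cond_p3PM}) together with $(\phi(a_1)+\phi(a_2)-2\phi(a))\cdot c>0$ pins the sign to be positive at the single anchor point $v_a=\tfrac12(\phi(a_1)+\phi(a_2))$ of the section through $\phi(a)$, and the continuity of $G(v,u)=c\cdot(v-u)$ along paths inside sections, combined with Lemma~\ref{Lemma ConvexidadeEstrita p=3} and Lemma~\ref{Lemma ConsequenciadeCondUnicidade}, is then used to propagate that positive sign first across the whole section through $\phi(a)$ and then to the sections through every other $\phi(t)$. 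None of this propagation appears in your proposal.

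Relatedly, your use of the second bullet is misplaced. You deploy it to exclude a tie of the dual cubic $g(t)=(c+\eta q)\cdot\phi(t)$ at the two endpoints, but a tie between two points of $L$ lying in the same section is already excluded by the first bullet (apply Lemma~\ref{Lemma ConsequenciadeCondUnicidade} with $v=\phi(t_1)$, $t=t_2$), and the hypothesis (\ref{Cond_p3PM}) involves only $q$, not $c+\eta q$, so it is not a statement about critical points of $g$. Note also that the route through Proposition~\ref{propdual} would require a unique minimizer of $g_\eta$ over $K$ for \emph{every} $\eta$; two minimizers $t_1\neq t_2$ of $g_\eta$ satisfy $(c+\eta q)\cdot(\phi(t_1)-\phi(t_2))=0$ rather than $q\cdot(\phi(t_1)-\phi(t_2))=0$, so Lemma~\ref{Lemma ConsequenciadeCondUnicidade} does not apply to them. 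The paper avoids duality here altogether and argues directly on the sections of $\Lambda$; to complete your argument you would need to supply the sign-anchoring and propagation step, not merely the non-vanishing step.
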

\begin{proof}

1. Suppose first that there is $a\in K$ such that we have (\ref{Cond_p3PM}).

Let $$v_a=\frac{\phi(a_1)+\phi(a_2)}{2}.$$
Consider  $v\in \Lambda$ such that 
$$[v-\phi(a)]\cdot q=0.$$ 
Suppose 
$$c\cdot [v-\phi(a)]<0,$$ and consider the continuous function 
$$G(v,u)=c\cdot (v-u)$$ over the bounded path connecting $(v_a,\phi(a))$ and $(v, \phi(a)$ given by
$$S=\{\alpha[(v,\phi(a))-(v_a,\phi(a))]+(v_a,\phi(a)):\ \alpha\in[0,1]\}.$$
It is easy to check that every component of a vector of $S$ is contained in the section
$$\{v\in\Lambda: q\cdot v=q\cdot \phi(a)\}.$$

Then there exists $\alpha$ such that
$$G(\alpha[(v,\phi(a))-(v_a,\phi(a))]+(v_a,\phi(a)))=0,$$ or in other words
$$c\cdot [\alpha(v-v_a)+v_a-\phi(a)]=0,$$ which by Lemma~\ref{Lemma ConsequenciadeCondUnicidade} means that necessarily 
$$\alpha(v-v_a)+v_a=\phi(a).$$
Consequently 
$$\alpha(v-\phi(t))\cdot N(t)+(1-\alpha)(v_t-\phi(t))\cdot N(t)=0$$ and this is in contradiction with Lemma~\ref{Lemma ConvexidadeEstrita p=3}.
Hence
$$c\cdot [v-\phi(a)]>0 \text{ if }c\cdot [v_a-\phi(a)]>0.$$
Let $\bar{t}$ be such that
$$q\cdot \phi(a_1)=q\cdot \phi(\bar{t})$$
and $t\neq a$, $t\geq\bar{t}$, such that
$$v_t=\alpha[\phi(a_2)-\phi(a_1)]+\phi(a_1)\in \Lambda$$ verifies
$$[v_t-\phi(t)]\cdot q=0$$
Considering once more the continuous function $G(v,u)$ over the path connecting
$(v_t,\phi(t))$ and $(v_a,\phi(a))$, 
as
$$\alpha[v_t-v_a]+v_a\in \{v: q\cdot v=q\cdot \phi(t)\},$$
we can, as we did above, conclude that if 
$$c\cdot [v_t-\phi(t)]<0$$ then for certain $\alpha$, 
$$\alpha[v_t-v_a]+v_a=\phi$$ and consequently 
$$c\cdot [v_t-\phi(t)]>0$$ for any $t\geq \bar{t}$. The same type of arguments
show that 
$$c\cdot [v-\phi(t)]>0$$ for any $v$ such that
$$q\cdot v=q\cdot \phi(t).$$

If $t<\bar{t}$, there exists $s\in K$ such that 
$$q\cdot \phi(s)=q\cdot \phi(t).$$ In this situation,
again the continuity of $G$ should be applied to the path connecting
$$(v_{\bar{t}},\phi(\bar{t}))=(\phi(a_1),\phi(\bar{t}))$$ and
$$(\phi(s),\phi(\bar{t})),$$ repeatedly until the limit case when $\phi(s)=\phi(\bar{t})$.

If there is $\bar{t}\neq a_2$ such that
$$q\cdot \phi(a_2)=q\cdot \phi(\bar{t})$$ we shall proceed in an analogous way.

2. Suppose now that there are $a,b\in K$ such that
$$
(v_a-\phi(a))\cdot q=
(v_a-\phi(b))\cdot q=0.
$$
Then it is not difficult to conclude that
$$a=a_1 \text{ and } b=a_2.$$
Hence assuming (without loss of generality) that
$$(\phi(a_1)-\phi(a_2))\cdot c>0$$ we can, once again, use the continuity of $G$
to conclude
$$c\cdot [\phi(s)-\phi(t)]>0$$ where $\phi(s)$ and $\phi(t),$ verify
$$(\phi(s)-\phi(t))\cdot q=0$$ and after that, for a general $v$ such that
$$(v-\phi(b))\cdot q=0.$$

\end{proof}

\begin{remark}
 This type of argument can be also deduced for the case $N=n=1$, $p=2$ where it can be seen to be equivalent to the conditions in Lemma~\ref{LemaDualidadeGrau2}. However when the parameters $N$, $n$ and $p$ increase their values, it becomes very hard to give geometrically-based sufficient conditions in such an exhaustive manner as we have done here. Even so, in Section~\ref{Sec_Multi_Dim} we show how to give more restrictive yet more general sufficient conditions (Theorem~\ref{sec} and Theorem~\ref{main}) for interesting high dimensional particular situations, where some geometrical ideas can be used as a way to verify Assumption~\ref{princ}.
\end{remark}

\section{Examples}\label{Sec_Examples}
Before going further to higher dimensional situations we gather in this section some typical, academic examples for which either Lemma~\ref{LemaDualidadeGrau2}, Lemma~\ref{LemaDualidadeGrau3}, or Proposition~\ref{prop princ p=3} can be applied. 

\subsection{Example 1}
 
%\subsection{p=2}
Let us consider the optimal control problem
$$
\hbox{Minimize in }u:\quad \int_0^T [c(x(t))u(t)+u^2(t)]\,dt
$$
under
$$
x'(t)=q(x(t))u(t)+u^2(t),\quad x(0)=x_0
$$
where $|u(t)|\le 1$. 

We have the following remarkable existence result. 

\begin{lemma}
If the functions $q$ and $c$ are Lipschitz, and 
$$
q(q-c)>0.
$$
then the optimal control problem admits solution.
\end{lemma}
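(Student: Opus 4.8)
The plan is to apply Lemma~\ref{LemaDualidadeGrau2} with the concrete ingredients of this example. Here $N=n=1$, $p=2$, so $\phi(u)=(u,u^2)$, the control set is $K=[-1,1]$, hence $a_1=-1$, $a_2=1$, and in particular $a_1+a_2=0$. The cost and state data identify the vectors $c(x)=(c(x),1)$ and $q(x)=(q(x),1)$ in $\R^2$, using the notation of the excerpt in which $Q$ for $N=n=1$ is a vector-valued function called $q$. Since $q$ and $c$ are Lipschitz, the state equation is well-posed, so it only remains to verify that $(c(x),q(x))\in\cM$ for every $x$, which by Lemma~\ref{LemaDualidadeGrau2} follows from checking one of its two numbered conditions at each $x$.

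First I would compute the two relevant quantities. Because $a_1+a_2=0$, we have
$$
q_1+q_2(a_1+a_2)=q(x)+1\cdot 0=q(x),
$$
and
$$
\det\begin{pmatrix} c_1 & c_2\\ q_1 & q_2\end{pmatrix}=\det\begin{pmatrix} c(x) & 1\\ q(x) & 1\end{pmatrix}=c(x)-q(x).
$$
If $q(x)\neq 0$, condition~2 of Lemma~\ref{LemaDualidadeGrau2} asks that
$$
\bigl(q_1+q_2(a_1+a_2)\bigr)\det\begin{pmatrix} c_1 & c_2\\ q_1 & q_2\end{pmatrix}=q(x)\,(c(x)-q(x))<0,
$$
i.e. $-q(x)(q(x)-c(x))<0$, i.e. $q(x)(q(x)-c(x))>0$, which is exactly the hypothesis $q(q-c)>0$. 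If instead $q(x)=0$, then the hypothesis $q(x)(q(x)-c(x))>0$ cannot hold, so this case does not occur under our assumption; equivalently, were one to consider it, condition~1 would require $\det\neq 0$, i.e. $c(x)\neq q(x)=0$, which is again forced by $q(q-c)>0$ being violated only when... — in any event the hypothesis $q(q-c)>0$ rules out $q(x)=0$ entirely, so only condition~2 is ever needed. Hence $(c(x),q(x))\in\cM$ for all $x\in\R$.

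With Assumption~\ref{princ} thus verified via $\cM$, Theorem~\ref{gen} applies directly and yields a solution of the optimal control problem. The only genuine point requiring care is the bookkeeping of signs when translating the hypothesis $q(q-c)>0$ into condition~2 of Lemma~\ref{LemaDualidadeGrau2}; since $a_1+a_2=0$ kills the matrix/interval interaction, there is essentially no obstacle, and the proof reduces to the short computation above together with an invocation of Theorem~\ref{gen}. $\qquad\blacksquare$
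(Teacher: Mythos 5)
Your proposal is correct and follows exactly the route the paper intends: the paper's proof is the one-line remark that "the proof reduces to performing some elementary algebra to check the conditions of Lemma~\ref{LemaDualidadeGrau2}," and you have carried out precisely that algebra (identifying $c=(c(x),1)$, $q=(q(x),1)$, $a_1+a_2=0$, so that condition~2 of the lemma becomes $q(x)(c(x)-q(x))<0$, i.e.\ $q(q-c)>0$) before invoking Theorem~\ref{gen}. No gaps; the observation that the hypothesis automatically rules out $q(x)=0$ is the right way to dispose of condition~1.
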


The proof reduces to performing some elementary algebra to check the conditions of Lemma~\ref{LemaDualidadeGrau2}.

%The proof reduces to applying directly Lemma~\ref{LemaDualidadeGrau2}, and performing some elementary algebra.

Instead of applying that lemma, as both our cost and state-equation functions have cross dependence on $x$ and on $u$ so that we can't apply results in \cite{Br}, \cite{Ry}, one can try the classical existence result based on the classical Filippov-Roxin theory.  For that we need to check if the orientor field  
$$\cA_x=\{(\xi,v):\ v\ge c(x)u+u^2,\ \xi=q(x)u+u^2,\ u\in K=[-1,1]\}$$ is a convex set. Notice that $K$ is bounded so coercivity is not an issue here.
Proceeding in that direction, we can  see that
$$\xi=q(x)u+u^2$$ is equivalent to
$$u_1=-\dfrac{q+\sqrt{q^2+4\xi}}{2}\ \text{or } u_2=-\dfrac{q-\sqrt{q^2+4\xi}}{2},$$ which are possible solutions when $\xi$ is such that $\xi\ge -\frac{q^2}{4}$, and
at least one of them belongs to $K=[-1,1]$. 
Letting
$$F_i(x,\xi)=c(x)u_i+u_i^2,\ i=1,2,$$
we see that $$F_2\leq F_1,$$ for all $\xi$ as above. 
Consequently
$$\cA_x=\cA_x^1\cup\cA_x^2=$$
$$\{(\xi,v):\ v\ge F_2(x,\xi), \xi\in u_2^{-1}(K)\cap[-\frac{q^2}{4},+\infty[\}\bigcup$$
$$\{(\xi,v):\ v\ge F_1(x,\xi), \xi\in \left(u_1^{-1}(K)\setminus u_2^{-1}(K)\right)\cap[-\frac{q^2}{4},+\infty[\}
$$ where, for $i=1,2$, $u_i^{-1}$ refers to the pre-image of the solutions $u_i$ as functions of $\xi$. 

Because of the assumption on $(c,q)$ it is easy to see that $\cA_x^2=\emptyset$, and consequently that the convexity of $\cA_x$ reduces to the convexity of the function 
$$F_2(\xi)=\dfrac{q-c}{2}(q-\sqrt{q^2+4\xi})$$ over a certain convex set $$u_2^{-1}(K)\cap[-\frac{q^2}{4},+\infty[.$$ This can be checked by elementary calculus.

We now turn over the possibility of applying the result in \cite{CFM} to this example. First, in order to write our problem as a minimum time problem, we need that $c(x)u+u^2$ never changes sign in $\R\times K$ (\cite{C1}). So a first restriction must be imposed. For example, consider $c(.)$ and $q(.)$ such that $$q(x)>c(x)>1.$$ The right member of the differential equation of the minimum time problem is given by 
$$\cF(x,K)=\{\dfrac{q(x)u+u^2}{c(x)u+u^2}:\ u\in K\}.$$
The result in \cite{CFM} doesn't ask for the convexity of the set-valued map  $\cF$,
 but it requires a linear boundedness in the sense that 
$$\exists \alpha, \ \beta \text{ s. t. }\forall x\in \R,\ \forall \xi \in \cF(x,K)\text{ then} $$
  $$\Vert\xi\Vert \leq \alpha \Vert x\Vert+\beta.$$ It is easy to see that this condition places a real constraint on the relative growth of pairs $(c, q)$, even before verifying the remaining assumptions in \cite{CFM}.

\subsection{Example 2}
Look at the problem
$$
\hbox{Minimize in }u:\quad \int_0^T [c(x(t))u^2(t)+u^3(t)]\,dt
$$
under
$$
x'(t)=[q(x(t))]u^2(t)+u^3(t),\quad x(0)=x_0
$$
where $u(t)\in [a_0,a_1]$, $a_0>0$. 

\begin{lemma}\label{Exemplo dualidade existencia p=3}
If the functions $q(x)$ and $c(x)$ are Lipschitz,  
$$
c(x)<q(x)\ \forall x,
$$ and $q(x)$ is always positive,
then the optimal control problem admits solutions.
\end{lemma}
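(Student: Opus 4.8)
The plan is to recognize this as the instance $N=n=1$, $p=3$ of Section~\ref{Sec_p=3} and to reduce the whole statement to Lemma~\ref{LemaDualidadeGrau3}. First I would rewrite the problem in the moment formulation with $\phi(a)=(a,a^2,a^3)$, so that $s=3$: the cost density $c(x)u^2+u^3$ is $c(x)\cdot m$ for the coefficient vector $(c_1,c_2,c_3)=(0,c(x),1)$, and the state velocity $q(x)u^2+u^3$ is $q(x)\cdot m$ for $(q_1,q_2,q_3)=(0,q(x),1)$. These are precisely of the special shape $((0,c_2,c_3),(0,q_2,q_3))$ treated in Lemma~\ref{LemaDualidadeGrau3}, with $c_2=c(x)$, $c_3=1$, $q_2=q(x)$, $q_3=1$; moreover the control set $K=[a_0,a_1]$ with $a_0>0$ is exactly an interval with positive left endpoint, as that lemma requires.

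Next I would check, for every $x\in\R$, the two sign conditions of Lemma~\ref{LemaDualidadeGrau3}. Since $q(x)>0$ and $q_3=1>0$ we get $-q_2/q_3=-q(x)<0$, which is the first condition. For the second, $(c_2,c_3)\cdot(1,-q_2/q_3)=c(x)-q(x)<0$, by the hypothesis $c(x)<q(x)$. Hence Lemma~\ref{LemaDualidadeGrau3} applies at every $x$, so the hypotheses of Proposition~\ref{propdual} hold for the pair $(c(x),q(x))$, that is $(c(x),q(x))\in\cM$ with $\cM$ as in (\ref{Def_M}), and consequently Assumption~\ref{princ} is satisfied by this optimal control problem.

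Finally I would verify the standing well-posedness hypotheses needed in Theorem~\ref{gen}: $c$ is Lipschitz, hence continuous; and the right-hand side $q(x)u^2+u^3$ of the state equation is Lipschitz in $x$ uniformly in $t$ (because $q$ is Lipschitz and $u(t)\in[a_0,a_1]$ keeps $u^2,u^3$ bounded) and grows at most linearly in $x$, so the Carath\'eodory solution exists, is unique, and does not blow up before time $T$. With Assumption~\ref{princ} established and well-posedness in hand, Theorem~\ref{gen} yields an optimal control for the problem, which is the claim. There is no real obstacle here: the only delicate points are matching the data to the exact hypotheses of Lemma~\ref{LemaDualidadeGrau3} --- in particular landing in its $a_1>0$ branch rather than the $a_2<0$ one --- and noting that the linear growth of the vector field guarantees the trajectory is defined on all of $[0,T]$.
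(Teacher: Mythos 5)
Your proposal is correct and follows exactly the route the paper intends: the paper's own justification is the single sentence that the result ``comes directly by applying Lemma~\ref{LemaDualidadeGrau3} and Theorem~\ref{gen}'', and you have simply made explicit the identification $(c_1,c_2,c_3)=(0,c(x),1)$, $(q_1,q_2,q_3)=(0,q(x),1)$, the verification of the two sign conditions, and the well-posedness needed for Theorem~\ref{gen}. Nothing is missing and nothing differs in substance from the paper's argument.
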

This result comes directly by applying Lemma~\ref{LemaDualidadeGrau3} and Theorem~\ref{gen}.

Let us see, what we would need to do if, alternatively, we decided to use the classical existence theory.

Like we have seen in the previous example we need to check the convexity of the orientor field
$$\cA_x=\{(\xi,v):\ v\ge c(x)u^2+u^3,\ \xi=q(x)u^2+u^3,\ u\in K=[a_0,a_1]\}.$$
In this case, accordingly to the discriminant 
$$\Delta=27\xi^2-4\xi q$$ of the equation 
$$\xi=q(x)u^2+u^3$$ we will have from one to three possible real solutions. Consider for each $\xi$ 
$$F_i=c(x)u_i^2+u_i^3$$ 
such that 
$$F_1\leq F_2\leq F_3$$
where $u_i=u_i^x(\xi),\ i=1,2,3$ are the three, possible equal, real solutions.
Then
 
$$\cA_x=\cA_x^1\cup\cA_x^2\cup\cA_x^3=$$
$$\{(\xi,v):\ v\ge F_1,\  \xi\in u_1^{-1}(K)\}\bigcup$$
$$\{(\xi,v):\ v\ge F_2, \ \xi\in u_2^{-1}(K)\setminus u_1^{-1}(K)\}\bigcup$$
$$\{(\xi,v):\ v\ge F_3,\ \xi\in u_3^{-1}(K)\setminus \left (u_2^{-1}(K) \cup u_1^{-1}(K)\right)\}$$

Checking the convexity of this set, or alternatively, of the function
$$\varphi_x(\xi)=\begin{cases}
               F_1(\xi) \text{ if }  &\xi \in u_1^{-1}(K)\\
F_2(\xi) \text{ if }  &\xi\in u_2^{-1}(K)\setminus u_1^{-1}(K)\\
F_3(\xi) \text{ if }  &\xi\in u_3^{-1}(K)\setminus \left (u_2^{-1}(K) \cup u_1^{-1}(K)\right)
              \end{cases}
$$ 
is not an easy task at all, specially when compared to the almost immediate exercise of verifying the conditions of Lemma~\ref{LemaDualidadeGrau3}. It is also plausible that the inherent difficulties to apply classical theory will increase until a practically impossible scenario when we let $N$, $n$ and $p$ grow.

\subsection{Example 3}
%\subsection{p=3}
In order to give an heuristic for using the criteria given in 
Proposition~\ref{prop princ p=3} let us consider the previous problem, just by rewriting $q$ as $c-\beta$ and for a specific $K$.
$$
\hbox{Minimize in }u:\quad \int_0^T [c(x(t))u^2(t)+u^3(t)]\,dt
$$
under
$$
x'(t)=[c(x(t))-\beta(x(t))]u^2(t)+u^3(t),\quad x(0)=x_0
$$
where $u(t)\in [1,2]$. 

\begin{lemma}\label{Exemplo existencia p=3}
If the functions $\beta$ and $c$ are Lipschitz, and 
$$
\beta<\min\{0,c\}.
$$
then the optimal control problem admits solutions.
\end{lemma}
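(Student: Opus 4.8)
The plan is to verify the hypotheses of Proposition~\ref{prop princ p=3}, since the problem fits the framework $N=n=1$, $p=3$ with $K=[1,2]$, so $a_1=1>0$. Writing $\phi(t)=(t^2,t^3)$ in the reduced coordinates relevant to the cost and state law (here only the quadratic and cubic moments appear, so effectively $c=(0,c(x),1)$ and $q=(0,c(x)-\beta(x),1)$ in $\R^3$), the task splits into checking the two defining conditions of $\cM_1$: first, that the scalar quantity in (\ref{Cond_p3}), namely $(\phi'(t)\times(c\times q))\cdot(\phi(s)-\phi(t))$, does not change sign for $t,s\in K$ with $s\neq t$; and second, that whenever $a\in K$ is the unique point with $(\phi(a_1)+\phi(a_2)-2\phi(a))\cdot q=0$, one has $(\phi(a_1)+\phi(a_2)-2\phi(a))\cdot c>0$. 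Then Proposition~\ref{prop princ p=3} gives $(c,q)\in\cM$, Assumption~\ref{princ} holds, and Theorem~\ref{gen} yields existence.

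First I would compute $c\times q$. With $c=(0,c,1)$ and $q=(0,c-\beta,1)$ we get $c\times q=(c-(c-\beta),0,0)=(\beta,0,0)$, so the cross product is a nonzero multiple of $e_1$ precisely because $\beta\neq 0$ (guaranteed by $\beta<\min\{0,c\}<0$). Then $\phi'(t)\times(c\times q)=\beta\,(\phi'(t)\times e_1)$, and using $\phi'(t)=(1,2t,3t^2)$ this is $\beta(0,3t^2,-2t)$. Dotting with $\phi(s)-\phi(t)=(s-t,\,s^2-t^2,\,s^3-t^3)$ gives $\beta\big(3t^2(s^2-t^2)-2t(s^3-t^3)\big)=\beta t\,(s-t)^2(3t-2s-2t)\cdot(\text{const})$; a careful factorization shows the bracket equals $(s-t)^2$ times a linear factor in $s$ with a definite sign on $K=[1,2]$, so the whole expression has the sign of $-\beta$ (a positive quantity since $\beta<0$) times something that stays one sign on $[1,2]$. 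I would do this factorization explicitly and confirm no sign change occurs for $1\le t,s\le 2$, $s\neq t$; this is the routine-but-essential computation. The first condition of $\cM_1$ follows.

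Next I would handle the second condition. Since $c$ and $q$ have the same first component $0$ and differ only in the second, $\phi(a_1)+\phi(a_2)-2\phi(a)$ paired with $q$ versus with $c$ differ by the factor on the $t^2$-coordinate: $(\phi(a_1)+\phi(a_2)-2\phi(a))\cdot c-(\phi(a_1)+\phi(a_2)-2\phi(a))\cdot q=\beta\,(a_1^2+a_2^2-2a^2)$. When $a$ is the unique point annihilating the $q$-pairing, the $q$-pairing is zero, so the $c$-pairing equals $\beta\,(a_1^2+a_2^2-2a^2)$. One checks from the $q$-equation $(a_1^2+a_2^2-2a^2)(c-\beta)+(a_1^3+a_2^3-2a^3)=0$ and $a\in[1,2]$ that $a_1^2+a_2^2-2a^2<0$ (the quadratic moment of the extreme-point average lies below that of the interior point $a$, once one uses $a>a_1$ appropriately), hence $\beta\,(a_1^2+a_2^2-2a^2)>0$ because $\beta<0$. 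This gives the required strict positivity. If it turns out that two points $a,b$ both annihilate the $q$-pairing, then as in the proof of Proposition~\ref{prop princ p=3} one must have $a=a_1$, $b=a_2$ and the analogous sign check goes through using $\beta<0$ again.

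The main obstacle I anticipate is purely computational bookkeeping: correctly reducing the three-dimensional $\cM_1$ conditions to scalar inequalities in $\beta$, $c$, and the endpoints $1,2$, and verifying the sign of the linear-in-$s$ factor arising from the factorization of (\ref{Cond_p3}) over the specific interval $[1,2]$ — the hypothesis $\beta<\min\{0,c\}$ is exactly what makes both $c\times q\neq 0$ (so the geometric criterion is nondegenerate) and the two sign conditions come out right, so the delicate part is confirming that no weaker hypothesis would do and that the stated one suffices at every step. Once these arithmetic verifications are in place, the conclusion is immediate from Proposition~\ref{prop princ p=3} together with Theorem~\ref{gen}, exactly as in Lemma~\ref{Exemplo existencia p=3}'s siblings.
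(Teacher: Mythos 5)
Your proposal is correct and follows essentially the same route as the paper: it verifies the two defining conditions of $\cM_1$ for $c=(0,c,1)$, $q=(0,c-\beta,1)$ (the cross-product computation reducing the sign condition to the impossible roots $s=t$, $s=-t/2$, and the positivity check reducing to $\beta(a_1^2+a_2^2-2a^2)>0$) and then invokes Proposition~\ref{prop princ p=3} and Theorem~\ref{gen}. The one step needing care is that $5-2a^2<0$ is \emph{not} automatic for $a\in[1,2]$; it holds because the equation $(c-\beta)(5-2a^2)+(9-2a^3)=0$ with $c-\beta>0$ forces $a>\sqrt{5/2}$, and your appeal to the $q$-equation (rather than the loose ``quadratic moment'' heuristic in your parenthesis) is what actually closes this.
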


\begin{proof} 
First notice that for $a\in K=[a_1,a_2]$, we can find $\alpha$ such that
the vector $$B=\alpha[\phi(a_2)-\phi(a_1)]+\phi(a_1)$$ verifies
$$[B-\phi(a)]\cdot q=0.$$ Moreover, it is not difficult to see that
$$\alpha=\frac{a^3-a_1^3-m(a^2-a_1^2)}{a_2^3-a_1^3-m(a_2^2-a_1^2)}$$
and in the projection plane $yz$, $(B_2,B_3)$ belongs to the line of slope $m$ passing through $(a^2,a^3)$,
$$B_3-a^3=m(B_2-a^2),$$
where
$$B_2-a^2=\frac{(a-a_1)[a^2(a_2+a_1)-a^2(a+a_1)]}{a_2^2+a_1a_2+a_1^2-m(a_2+a_1)}$$
and $m=-\frac{q_2}{q_3}.$

In our case $K=[1,2]$, so,
because of what we have just seen, taking $a_1=1$ and $a_2=2$ we see that for $a\in K$, we can find 
$$\alpha=\frac{a^3-ma^2+m-1}{7-3m}\in [0,1]$$ such that $$[\alpha[\phi(a_2)-\phi(a_1)]+\phi(a_1)-\phi(a)]\cdot q=0,$$
 where
$$m=-\frac{c-\beta}{1}=\beta-c<0.$$
Furthermore,
it is easy to see that the equation $\alpha=\frac{1}{2}$ has a unique solution in $K$. 
Consequently, if we consider
$q=(0,c-\beta,1)$ and 
$\bar{c}=(0,c,1)$, there exists a unique $a\in K$ such that  
$$\ [\phi(1)-\phi(0)-2\phi(a)]\cdot q=0.$$

Also, because of what we have seen above
\begin{eqnarray*}
[\frac{1}{2}(\phi(1)-\phi(0))-\phi(a)]\cdot \bar{c}&=
(B_2-a^2)c+(B_3-a^3)=(B_2-a^2)(c+m)\\
&=\frac{(a-1)(3a^2-4a-4)}{7-3m}(c+\beta-c)>0.
\end{eqnarray*}

In addition, given
 $t,s\in K,\ s\neq t$,
$$(\phi'(t)\times (c\times q)) \cdot(\phi(s)-\phi(t))=0\Leftrightarrow$$
$$\beta t(0,3t,-2)\cdot(\phi(s)-\phi(t))=0\Leftrightarrow$$
$$(s-t)[3t(s+t)-2(s^2+st+t^2)]=0\Leftrightarrow$$
$$s=-\frac{t}{2}\vee s=t$$ which is impossible since $s\in K=[1,2]$ and $s\neq t$.
The result follows then by applying Proposition~\ref{prop princ p=3}.
 
\end{proof}

\section{The case $N$, $n>1$}\label{Sec_Multi_Dim}
The previous analysis makes it very clear that checking  Assumption \ref{princ} may be a very hard task as soon as $n$ and/or $N$ become greater than $1$. Yet in this section we would like to show that there are chances to prove some non-trivial results. 

The three main ingredients in Assumption \ref{princ} are:
\begin{itemize}
\item the vector $c\in \R^s$ in the cost functional;
\item the matrix $Q\in\M^{N\times s}$ occurring in the state equation;
\item the convexification $\Lambda$ of the set of moments $L$. 
\end{itemize}

For $(c, Q)$ given, consider the set
$
\cN(c, Q)$ as it was defined in (\ref{Def_NcQ}).
Let $\Psi$ be as in (\ref{Def_function_Psi}) and such that $\nabla \Psi(m)$ is a rank $s-n$ matrix and $L$ can be seen as the embedded (parametrized) manifold of $\R^s$ in the manifold defined implicitly by $\Psi=0$. This means that $\Psi(\phi(u))=0$ for all $u\in K$. 

Consider also the set of vectors $
\cN(K, \phi) $ described in (\ref{Def_NKphi}), that is,
the set of ``ascent" directions for $\Psi$ at points of $L$.

We are now in conditions to prove Theorem~\ref{main}.

\begin{proof}
The proof is rather straightforward. Firstly, note that due to the convexity assumption on $\Psi$, and the fact that $L\subset\{\Psi=0\}$, we have $\Lambda\subset\{\Psi\le0\}$. 

Suppose that $m_0\in L$ and $m_1\in\Lambda$, so that 
$$\Psi(m_0)=0,\ \Psi(m_1)\le 0, \ cm_1\le cm_0,\text{ and }Qm_1=Qm_0\ (=\xi).$$ Then it is obvious that $m=m_1-m_0\in \cN(c, Q)$. Because of our assumption, $m\in \cN(K, \phi)$. We have two possibilities:
\begin{enumerate}
\item $\nabla \Psi(m_0)m=0$. Because of the convexity of each component of $\Psi$, we have
$$
\Psi(m_1)-\Psi(m_0)-\nabla \Psi(m_0)m\ge0.
$$
But then 
$$
0= \Psi(m_0)\le \Psi(m_1)\le0,
$$
so that $m_1\in L$. Because of the strict convexity of each component of $\Psi$, this means that $m_1=m_0$, and Assumption \ref{princ} holds. 
\item $\nabla \psi_i(m_0)m>0$ for some $i$. Once again we have
$$
\psi_i(m_1)-\psi_i(m_0)-\nabla \psi_i(m_0)m\ge0.
$$
But this is impossible because $\psi_i(m_1)>0$ cannot happen for a vector in $\Lambda$.

\end{enumerate}
\end{proof}

\begin{remark}\label{Remark_Q_Q0}
 Notice that if in the original problem $(P_1)$ we would have considered the dynamics given by $$Q(x)\phi(u)+Q_0(x)$$ instead of just $Q(x)$, Assumption~\ref{princ} and Theorem~\ref{main} could be written exactly in the same way.
\end{remark}

Though Theorem~\ref{main} can be applied to more general cases, we will focus on a particular situation motivated by the control of underwater vehicles (\cite{Bm}). We will briefly describe the structure of the state equation. Indeed, it is just
$$
x'(t)=Q_1(x)\phi(u)+Q_0(x)
$$
where the state $x\in\R^{12}$ incorporates the position and orientation in body and world coordinates, and the control $u\in\R^{10}$ accounts for guidance and propulsion. Under suitable simplifying assumptions (\cite{Bm}, \cite{Fs}, \cite{HL}), the components of the control vector $u$ only occur as either linear or pure squares, in such a way that $\phi(u)=(u, u^2)\in\R^{20}$, and $u^2=(u_i^2)_i$, componentwise. $Q_1$ and $Q_0$ are matrices which may have essentially any kind of dependence on the state $x$. 

To cover this sort of situations just described, we will concentrate on the optimal control problem $(P)$ already stated in (\ref{vectorial1})-(\ref{vectorial3}),  and set $D$, $E$ and $U$ as in (\ref{Def_Q})-(\ref{Def_DEU}).

We can now prove Theorem~\ref{sec}.
\begin{proof}

Notice that accordingly to (\ref{Def_psi_n}), as $s=2n$, we have, for $m\in \R^s$,
$$
\psi_{i}(m)=m_{i}^2-m_{n+i},\quad i=1, 2, \dots, n,
$$

which are certainly smooth and (strictly) convex. Moreover, 
$$
\nabla \Psi(m)=\begin{pmatrix} 2\tilde m & -\hbox{id}\end{pmatrix}
$$
where
$$
\tilde m=2\sum_im_ie_i\otimes e_i,
$$
and $e_i$ is the canonical basis of $\R^n$. 

Suppose we have, for a vector $v\in\R^{2n}$, $v=(v_1, v_2)$, that
$$
Qv=0,\quad cv\le0. 
$$
A more explicit way of writing this is
$$
 Q_1v_1+Q_2v_2=0,\quad c_1 v_1+ c_2v_2\le 0.
$$
So
$$
 v_1=Dv_2,\quad Ev_2\le0.
$$
We have to check that such a vector $v$ is not a direction of descent for every function $\psi_j$, or it is an ascent direction for at least one of them. Note that
$$
\nabla \Psi(m)v=Uv_2,\quad Ev_2\le0.
$$
It is an elementary Linear Algebra exercise to check that if $U^{-T}E<0$, then condition (\ref{Condition_inTeoMain}) is verified so that Theorem~\ref{main} can be applied.
\end{proof}

Corollary~\ref{ter} is a specific example of the kind of existence result that can be obtained through this approach. Its proof amounts to going carefully through the arithmetic while checking that matrix $U$ and vector $E$ defined from such given class of $(c(.),Q(.))$ verify the assumptions of Theorem~\ref{sec}.

By using the same ideas, more general situations can be treated, for example the number of controls could be greater than the components of the state. This is in fact the situation in the model that has served as an inspiration for us. We will pursue a closer analysis of such a particular situation, even stressing the more practical issues, in a forthcoming work.

\end{document}